\pgfplotsset{compat=newest}
\definecolor {processblue}{cmyk}{0.96,0,0,0}
\author{
Mohamed Slim Kammoun
\footnote{
 Univ. Lille, CNRS, UMR 8524 - Laboratoire Paul Painlevé, F-59000 Lille, France.
 Email: \href{mailto:mohamed-slim.kammoun@univ-lille.fr}{\nolinkurl{mohamed-slim.kammoun@univ-lille.fr}}. 
}
\quad
Myl\`ene Ma\"ida
\footnote{
 Univ. Lille, CNRS, UMR 8524 - Laboratoire Paul Painlevé, F-59000 Lille, France.
 Email: \href{mailto:mylene.maida@univ-lille.fr}{\nolinkurl{mylene.maida@univ-lille.fr}}. 
}
}
\title{A product of  invariant random permutations has the same small cycle structure as uniform}
\newtheorem{theorem}{Theorem}
\newtheorem{corollary}[theorem]{Corollary}
\newtheorem{lemma}[theorem]{Lemma}
\newtheorem{proposition}[theorem]{Proposition}
\theoremstyle{definition}
\newcommand{\E}{\mathbb{E}}
\newcommand{\p}{\mathbb{P}}
\newcommand{\s}{\mathfrak{S}_n}
\newcommand{\bigslant}[2]{{\raisebox{.2em}{$#1$}\left/\raisebox{-.2em}{$#2$}\right.}}
\begin{document}
\maketitle
\begin{abstract} 
We use moment method to understand the cycle structure of the composition of independent invariant permutations. We prove that under a good control on fixed points and cycles of length $2$,  the  limiting   joint distribution of  the number of small cycles is the same as in the uniform case i.e. for any positive integer $k$,  the number of cycles of length $k$ converges to the Poisson distribution with parameter $\frac{1}{k}$ and is asymptotically independent of the number of cycles of length $k'\neq k$. 
\end{abstract}
\section{Introduction and main results} 
We denote 
by $\s$ the group of permutations of $\{1,\dots,n\},$  by $\#_k\,\sigma$ the number of cycles of $\sigma$ of length $k,$ by  $\#\,\sigma$ the total number of cycles of $\sigma$  and by $\textrm{tr}(\sigma):=\#_1\,\sigma$.
\paragraph*{}
The cycle structure of a permutation chosen uniformly among the symmetric group  $\s$ is  well understood (see e.g. \citep*{9783037190005} for detailed results). In particular, the following classical result holds:
\begin{theorem}\citep*[Theorem  3.1]{arratia2000} \label{thm-B}
If $\sigma_n$ follows the uniform distribution on $\s$ then for any $k\geq 1$,
\begin{align}\label{maineq}
(\#_1\,\sigma_n, \dots, \#_k\,\sigma_n) 
\xrightarrow[n\to \infty]{d}
 \eta_k:=(\xi_1,\xi_2,\dots,\xi_k),
\end{align}
where $\xrightarrow[n\to \infty]{d}$ denotes the convergence in distribution, 
$\xi_1,\xi_2,\dots \xi_k$ are independent and the distribution of $\xi_d$ is Poisson of parameter $\frac 1d$.
\end{theorem}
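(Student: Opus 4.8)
The plan is to follow the method of moments, computing exactly the joint factorial moments of $(\#_1\,\sigma_n,\dots,\#_k\,\sigma_n)$ and checking that, as $n\to\infty$, they stabilize to those of $\eta_k$. Recall that for a nonnegative integer-valued random variable $X$ the $m$-th factorial moment is $\E[(X)_m]$, with $(X)_m:=X(X-1)\cdots(X-m+1)$, and that a Poisson variable of parameter $\lambda$ has $\E[(X)_m]=\lambda^{m}$. Since the $\xi_d$ are independent, the joint factorial moments of $\eta_k$ factorize: $\E\bigl[\prod_{d=1}^k(\xi_d)_{m_d}\bigr]=\prod_{d=1}^k d^{-m_d}$ for every $(m_1,\dots,m_k)\in\mathbb{N}^k$. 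Moreover a finite product of independent Poisson laws is determined by its moments (each factor satisfies Carleman's condition), so by the (multivariate) method of moments it suffices to prove that $\E\bigl[\prod_{d=1}^k(\#_d\,\sigma_n)_{m_d}\bigr]\to\prod_{d=1}^k d^{-m_d}$ as $n\to\infty$.

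The heart of the argument is that these moments can be evaluated exactly for finite $n$. Indeed $(\#_d\,\sigma_n)_{m_d}$ counts the ordered $m_d$-tuples of pairwise distinct $d$-cycles of $\sigma_n$, so $\prod_{d=1}^k(\#_d\,\sigma_n)_{m_d}$ counts the ordered families consisting of $m_d$ distinct $d$-cycles for each $d$ --- necessarily pairwise disjoint. Set $L:=\sum_{d=1}^k d\,m_d$. By linearity of expectation, $\E\bigl[\prod_{d=1}^k(\#_d\,\sigma_n)_{m_d}\bigr]=N_n\cdot p_n$, where $N_n$ is the number of such families of disjoint cycles supported in $\inin$ and $p_n$ is the probability that a uniform $\sigma_n$ contains one prescribed such family. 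On the one hand, choosing an ordered sequence of $L$ distinct points of $\inin$, cutting it into the prescribed blocks and quotienting each block of size $d$ by its $d$ cyclic rotations gives $N_n=\frac{n!}{(n-L)!}\prod_{d=1}^k d^{-m_d}$. On the other hand, a uniform $\sigma_n$ contains a prescribed family of disjoint cycles covering $L$ points if and only if it acts on those points in the prescribed way and arbitrarily on the remaining $n-L$, so $p_n=\frac{(n-L)!}{n!}$. Hence, as soon as $L\le n$,
\begin{equation*}
\E\Bigl[\prod_{d=1}^k(\#_d\,\sigma_n)_{m_d}\Bigr]=\prod_{d=1}^k\frac{1}{d^{m_d}},
\end{equation*}
which is \emph{exactly} (not just asymptotically) the target moment.

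Since for each fixed $(m_1,\dots,m_k)$ the left-hand side above is eventually constant and equal to $\prod_d d^{-m_d}=\E\bigl[\prod_d(\xi_d)_{m_d}\bigr]$, the method of moments gives $(\#_1\,\sigma_n,\dots,\#_k\,\sigma_n)\xrightarrow[n\to\infty]{d}\eta_k$. The only point requiring care is the justification of the moment method in several variables, i.e. the fact that the limit law is characterized by its moments; here this is immediate because $\eta_k$ is a finite product of independent Poisson distributions, so no genuine obstacle remains. Alternatively, one may repackage the very same inclusion--exclusion over disjoint cycle families into probability generating functions, showing $\E\bigl[\prod_{d=1}^k z_d^{\#_d\,\sigma_n}\bigr]\to\prod_{d=1}^k e^{(z_d-1)/d}$ for $|z_d|\le 1$, and conclude by continuity of generating functions.
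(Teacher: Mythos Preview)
Your proof is correct and is the classical factorial-moment argument for the Poisson limit of small cycle counts. Note, however, that the paper does not give its own proof of this statement: Theorem~\ref{thm-B} is quoted as a known result from \citep{arratia2000} and used as background, so there is nothing in the paper to compare against. Your argument is essentially the one found in the cited reference (and going back to Goncharov and Shepp--Lloyd): the exact identity $\E\bigl[\prod_{d=1}^k(\#_d\,\sigma_n)_{m_d}\bigr]=\prod_{d=1}^k d^{-m_d}$ for $n\ge L=\sum_d d\,m_d$, together with moment-determinacy of products of independent Poisson laws, yields the claim. The only implicit step you might make explicit is that convergence of all joint factorial moments is equivalent to convergence of all joint ordinary moments (they are related by Stirling-number linear combinations), which is what the multivariate method of moments formally requires.
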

In this work, we question the universality class of this convergence. We show that a product of conjugation invariant permutations that do not have too many fixed points and cycles of size 2 lies within this class. More precisely, we have the following.
\begin{theorem} \label{cor}
Let $m \geq 2$. For $1\leq \ell \leq m$, let 
$(\sigma^{\ell}_n)_{n\geq 1}$  be a sequence of random permutations  such that for any $n\geq 1$, $\sigma^{\ell}_n \in \s.$ For any $k \ge 1,$ let $t^n_k:=\#_k(\prod_{\ell=1}^m \sigma^{\ell}_n).$
Assume that
\begin{itemize}
    \item[-] ($H_1$) For any $n\geq 1$, $(\sigma^1_n, \dots, \sigma^{\ell}_n)$ are independent. 
    \item[-] For any $n\geq 1$ and $1 \le \ell \le m,$ for any $\sigma \in \s,$
     \begin{align}
       \tag{$H_2$}  \sigma^{-1}\sigma_n^\ell\sigma\overset{d}{=}\sigma_n^\ell,
     \end{align}
 except maybe for one $\ell \in \{1,\ldots,m\}.$ 
    \item[-] There exists $1\leq i < j\leq m$ such that for any $k \geq 1$,  
    \begin{align}
    \tag{$H_3$}  \lim_{n\to\infty} \E\left(\left(\frac{\#_1 \,\sigma^{i}_n}{\sqrt n}\right)^k\right)=0 &\quad  \textrm{ and } \quad \lim_{n\to\infty} \E\left(\left(\frac{\#_1 \,\sigma^{j}_n}{\sqrt n}\right)^k\right) =0, \\
     \tag{$H_4$} \lim_{n\to\infty} \frac{\E(\#_2\,\sigma^i_n)}{n} =0 &\quad  \textrm{ and } \quad  \lim_{n\to\infty} \frac{\E(\#_2\,\sigma^j_n)}{n} =0.    \end{align}
\end{itemize}
Then for any $k\geq 1$,
\begin{align*}
(t^n_1,t^n_2,\dots,t^n_k) \xrightarrow[n\to \infty]{d} \eta_k.
\end{align*}
\end{theorem}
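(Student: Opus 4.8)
The plan is to show that the joint factorial moments of $(t^n_1,\dots,t^n_k)$ converge to those of $\eta_k$; since a vector of independent Poisson variables is determined by its moments, this is equivalent to the announced convergence (and is how Theorem~\ref{thm-B} itself is obtained). Concretely, one wants that for every $(a_1,\dots,a_k)\in\mathbb{N}^k$,
\[
\E\Big[\,\prod_{d=1}^{k}(t^n_d)_{(a_d)}\Big]\xrightarrow[n\to\infty]{}\prod_{d=1}^{k}\frac{1}{d^{a_d}},\qquad\text{where }(x)_{(a)}:=x(x-1)\cdots(x-a+1).
\]
Now $(t^n_d)_{(a_d)}$ counts the ordered $a_d$-tuples of pairwise distinct $d$-cycles of $\prod_\ell\sigma^\ell_n$, so the left-hand side is the sum, over all choices in $\inin$ of a disjoint family of ``marked'' cycles ($a_d$ of length $d$ for each $d\le k$, ordered within each length class, covering $L:=\sum_d d\,a_d$ points), of the probability that $\prod_\ell\sigma^\ell_n$ contains exactly these cycles. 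The whole proof is the asymptotic evaluation of this sum.

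First I would reduce to $m=2$. A product of independent permutations satisfying $(H_2)$ again satisfies $(H_2)$; and if $\sigma,\tau\in\s$ are independent, at least one of them satisfying $(H_2)$, and $\sigma$ obeys $(H_3)$--$(H_4)$ (i.e. $\E\big((\#_1\sigma/\sqrt n)^k\big)\to0$ for all $k$ and $\E(\#_2\sigma)/n\to0$), then $\sigma\tau$ obeys $(H_3)$--$(H_4)$ as well --- a short computation of the kind done below. Writing $\prod_{\ell=1}^m\sigma^\ell_n=A_nB_n$ with $A_n=\sigma^1_n\cdots\sigma^{j-1}_n$ (which has $\sigma^i_n$ among its factors, since $i<j$) and $B_n=\sigma^j_n\cdots\sigma^m_n$ (which has $\sigma^j_n$ among its factors), the block not containing the possibly exceptional factor is conjugation invariant, and applying the previous fact to $\sigma^i_n$, resp.\ $\sigma^j_n$, against the product of the remaining factors of its block (up to a harmless conjugation, since $\#_d$ is a class function) shows that $A_n$ and $B_n$ both satisfy $(H_3)$--$(H_4)$. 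Thus it suffices to handle $m=2$, and with the extra input --- crucial below --- that \emph{both} factors have $o(\sqrt n)$ fixed points in every moment and $o(n)$ two-cycles in mean.

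For $m=2$, call the factors $\sigma,\tau$ with, say, $\tau$ conjugation invariant. A tuple $(x_1,\dots,x_d)$ is a $d$-cycle of $\sigma\tau$ iff, for each $a$, $x_a\xrightarrow{\ \tau\ }y_a\xrightarrow{\ \sigma\ }x_{a+1}$ through a suitable intermediate point $y_a$, so ``$\sigma\tau$ contains the marked family'' means that there exist $L$ intermediate points $y_1,\dots,y_L$ for which $\tau$ and $\sigma$ each realise a prescribed set of $L$ arcs; by $(H_1)$ the probability of this is $\sum_{y_1,\dots,y_L}\p[\,\tau\text{ realises its }L\text{ arcs}\,]\,\p[\,\sigma\text{ realises its }L\text{ arcs}\,]$. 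Conjugation invariance of $\tau$ --- and, for $\sigma$, conditioning on $\sigma$, after which its layer only contributes a deterministic factor and twists the combinatorics without changing the counting --- makes each of these probabilities an explicit function of the \emph{shape} of the corresponding arc-set (how many pairwise disjoint non-closing arcs, how many forced cycles of each length) and of low moments of the cycle type: a single non-closing arc has probability $\E(n-\#_1\tau)/(n(n-1))=\tfrac1n(1+o(1))$, a forced $d$-cycle has probability $d\,\E(\#_d\tau)/(n)_{(d)}$, and jointly forcing $s$ fixed points has probability $\E\big((\#_1\tau)_{(s)}\big)/(n)_{(s)}$. On the \emph{generic} configuration --- all intermediate points pairwise distinct and distinct from the marked points, so that each factor realises only $L$ pairwise disjoint non-closing arcs --- the two probabilities multiply to $n^{-2L}(1+o(1))$, there are $(1+o(1))\,n^{L}$ admissible systems of intermediate points and $(1+o(1))\,n^{L}/\prod_d d^{a_d}$ admissible marked families, hence a contribution $(1+o(1))\prod_d d^{-a_d}$: this is exactly the uniform value and accounts for the Poisson parameters $1/d$.

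It remains to show that the non-generic configurations contribute $o(1)$ in total, and this is where $(H_3)$--$(H_4)$ are used and where the real work lies. A coincidence among the marked and intermediate points either strictly lowers the power of $n$ produced by the counting while leaving the orders of the probabilities unchanged --- such a term is negligible --- or it forces $\sigma$ or $\tau$ to have a fixed point, or a $2$-cycle, on prescribed points, and then the powers of $n$ lost from the counting are compensated by factors of the form $\E\big((\#_1\sigma)_{(s)}\big)/(n)_{(s)}$, $\E(\#_2\sigma)/n$ (or the analogues with $\tau$) identified above. Since \emph{both} factors satisfy $(H_3)$--$(H_4)$, every such factor tends to $0$, so each offending configuration is $o(1)$; organising this --- enumerating the degenerate configurations, isolating in each the forced short cycles, bounding the corresponding terms, and summing, uniformly in $(a_1,\dots,a_k)$ --- is the step I expect to be the main obstacle.
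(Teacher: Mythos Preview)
Your proposal is essentially the paper's proof: moment method, decomposition via intermediate points (your $y_a$ are the paper's $j_\ell^m$), the generic-versus-degenerate dichotomy with $(H_3)$/$(H_4)$ controlling the degenerate shapes, and the induction on $m$ via an iteration lemma (the paper's Lemma~\ref{lem:iteration}); the paper's graph equivalence classes $(\hat g_1,\hat g_2)$ together with Lemmas~\ref{b111}--\ref{lem17} are precisely the systematic bookkeeping of your ``shapes'' and supply the enumeration you identify as the main obstacle. The one place where your sketch is looser than the paper is the non-invariant factor: rather than ``conditioning on $\sigma$'' (which does not yield a shape-only probability for the $\sigma$-layer as you claim), the paper first conjugates the whole product by an independent uniform permutation, so that every factor may be assumed conjugation invariant and both probabilities then depend only on the graph class.
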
 
This convergence has also been obtained by \cite{Muk16} for a quite different class of permutations, namely the permutations that are equicontinuous in both coordinates and converging as a permuton (see Definitions in  \citep{Muk16}). Here, it is easy to check that for any $\theta \in [0,1],$ the Ewens distribution with parameter $\theta$ satisfies the convergences required in $H_3$ and $H_4.$ Our result tells that the product of (at least two) Ewens distributions behaves like a uniform permutation, as far as small cycles are concerned.

In our framework, in the case of two permutations, a weaker result can be obtained without any hypothesis on the cycles of size 2.
\begin{proposition} \label{prop}
When $m=2,$ under $H_1, H_2$ and $H_3$, we have  convergence of the  first moment i.e for  any $v\geq 1$, 
$$\lim_{n\to\infty}\E(t^n_v)= \frac{1}{v}.$$
\end{proposition}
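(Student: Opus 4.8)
The plan is to reduce first to the case where \emph{both} $\sigma^1_n$ and $\sigma^2_n$ are conjugation invariant, then to compute $\E(t^n_v)$ by a direct enumeration. For the reduction, note that by $(H_2)$ at least one of the two permutations is conjugation invariant, say $\sigma^2_n$. Let $g$ be uniform on $\s$, independent of $(\sigma^1_n,\sigma^2_n)$. Conjugating $g\sigma^1_n g^{-1}\,\sigma^2_n$ by $g^{-1}$ gives $\sigma^1_n\,(g^{-1}\sigma^2_n g)$, which has the same law as $\sigma^1_n\sigma^2_n$ because $g^{-1}\sigma^2_n g$ is independent of $\sigma^1_n$ and has the same law as $\sigma^2_n$; since $\#_v$ is invariant under conjugation, $\#_v(g\sigma^1_n g^{-1}\,\sigma^2_n)$ has the same law as $t^n_v$. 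But the pair $(g\sigma^1_n g^{-1},\sigma^2_n)$ is independent, both components are conjugation invariant, and $g\sigma^1_n g^{-1}$ has the same numbers of fixed points and of $2$-cycles as $\sigma^1_n$; so $(H_1)$--$(H_3)$ are preserved, and we may assume $\sigma^1_n,\sigma^2_n$ independent and conjugation invariant. Then $\sigma^1_n\sigma^2_n$ is itself conjugation invariant, and using the identity $\#_v(\tau)=\tfrac1v\#\{(a_1,\dots,a_v)\in\inin^v\text{ distinct}:\tau(a_i)=a_{i+1},\,i\in\mathbb Z/v\mathbb Z\}$ together with invariance,
\begin{align*}
\E(t^n_v)=\frac1v\,n(n-1)\cdots(n-v+1)\,P_n,\qquad P_n:=\p\big(\sigma^1_n\sigma^2_n(i)=i+1,\ i\in\mathbb Z/v\mathbb Z\big),
\end{align*}
so it suffices to show $n(n-1)\cdots(n-v+1)\,P_n\to1$.

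Next I would expand $P_n$ over the intermediate values $c_i:=\sigma^2_n(i)$: using $(H_1)$,
\begin{align*}
P_n=\sum_{c}\p\big(\sigma^2_n(i)=c_i,\ i=1,\dots,v\big)\,\p\big(\sigma^1_n(c_i)=i+1,\ i\in\mathbb Z/v\mathbb Z\big),
\end{align*}
the sum being over injections $c:\{1,\dots,v\}\to\inin$. The main contribution comes from those $c$ with $c(\{1,\dots,v\})\cap\{1,\dots,v\}=\emptyset$: for such $c$, both partial injections $i\mapsto c_i$ and $c_i\mapsto i+1$ are unions of $v$ pairwise vertex-disjoint non-loop edges, so by invariance the first probability equals a constant $q_n$ equal to the probability that $\sigma^2_n$ contains a given such collection of edges, namely $q_n=\E(R_v(\sigma^2_n))/(n(n-1)\cdots(n-2v+1))$, where $R_v(\tau)$ is the number of ordered $v$-tuples of pairwise vertex-disjoint non-loop edges of $\tau$; similarly the second probability equals some $q'_n$. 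Since $(n-\#_1\tau-4v)^v\le R_v(\tau)\le(n-\#_1\tau)^v$, hypothesis $(H_3)$, which gives $\E((\#_1\sigma^i_n)^k)=o(n^{k/2})$ for every $k$, yields $\E(R_v(\sigma^2_n))=n^v(1+o(1))$, whence $q_n=n^{-v}(1+o(1))$, and likewise $q'_n=n^{-v}(1+o(1))$. As there are $n^v(1+o(1))$ such maps $c$, their total contribution to $P_n$ is $n^v(1+o(1))\cdot q_nq'_n=n^{-v}(1+o(1))$, which after multiplication by $n(n-1)\cdots(n-v+1)$ equals $1+o(1)$.

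It then remains to check that the maps $c$ for which $c_i\in\{1,\dots,v\}$ for at least one index $i$ (a \emph{coincidence}) contribute $o(n^{-v})$ to $P_n$. Each coincidence $c_i=j$ fixes the coordinate $c_i$ — so a factor $n$ fewer such maps $c$ — and it imposes $\sigma^2_n(i)=j$ and $\sigma^1_n(j)=i+1$ simultaneously; coincidences glue the edges of the partial injections $i\mapsto c_i$ and $c_i\mapsto i+1$ into longer paths and cycles, a cycle of length $\ell$ in the former having to be exactly an $\ell$-cycle of $\sigma^2_n$ (and likewise on the $\sigma^1_n$-side). By invariance every probability factor is a ratio of numbers of embeddings of such path/cycle patterns; in particular, for a partial injection $\pi$ on $v$ edges whose cycles have lengths $\ell_1,\dots,\ell_\kappa$, $\p(\sigma^2_n\supseteq\pi)\le C_v\,n^{-v}\,\E\big(\prod_{r=1}^{\kappa}\#_{\ell_r}\sigma^2_n\big)$, and symmetrically for $\sigma^1_n$. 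Thus a $1$-cycle contributes a factor $\#_1\sigma^i_n$ — controlled by $(H_3)$ — while a cycle of length $\ell\ge2$ uses up $\ell$ of the $v$ edges and contributes at most $\#_\ell\sigma^i_n\le n/\ell$. A bookkeeping of the factors of $n$ lost (one per coincidence) against those regained shows that, for each of the $O_v(1)$ coincidence patterns, the contribution to $n(n-1)\cdots(n-v+1)\,P_n$ is $o(1)$: whenever the net power of $n$ is not strictly negative, the pattern necessarily contains a $1$-cycle, and $\E((\#_1\sigma^i_n)^k)=o(n^{k/2})$ then yields the $o(1)$. (No hypothesis on $2$-cycles is needed, since a $\#_2$-factor always comes with enough lost coordinates or $\#_1$-factors.) This gives $n(n-1)\cdots(n-v+1)\,P_n\to1$, i.e.\ $\E(t^n_v)\to\frac1v$.

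The step I expect to be the genuine obstacle is the last one — proving \emph{uniformly} that every coincidence pattern contributes $o(1)$. One must track simultaneously the free coordinates lost among the $c_i$, the powers of $\#_1\sigma^1_n$ and $\#_1\sigma^2_n$ created both by ``loop-type'' coincidences and by short cycles forming in the glued partial injections, and the crude $n/\ell$ bounds for genuine $\ell$-cycles with $\ell\ge2$; and one has to verify that the single borderline family — both glued partial injections being fixed-point-free unions of transpositions, forcing all indices to be coincidences and $c$ to be a reflection $j\mapsto\gamma-j$ of $\{1,\dots,v\}$ — cannot actually occur (such a reflection, or the permutation $j\mapsto\gamma+1-j$ it induces on the $\sigma^1_n$-side, always has a fixed point), so that a $\#_1$-factor is unavoidable there and $(H_3)$ closes the gap. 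By contrast, the reduction to two invariant permutations and the identification of the $\frac1v$ main term are routine.
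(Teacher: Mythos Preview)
Your proof is correct and follows essentially the same strategy as the paper, though in different dress. The paper also reduces to two conjugation-invariant permutations, expands the event $\{c_1(\sigma_n^{-1}\rho_n)=v\}$ over the intermediate values $j_\ell=\rho_n(i_\ell)$, and encodes the two resulting partial injections as a pair of graphs $(\mathcal G^1_1,\mathcal G^1_2)$; it then classifies pairs of equivalence classes $(\hat g_1,\hat g_2)$, bounds each via a general estimate (Lemma~\ref{b111}) that is exactly your $\p(\sigma\supseteq\pi)\lesssim n^{-v}\E(\prod\#_{\ell_r})$ accounting, and identifies the sole surviving class $\hat g_1=\hat g_2=\widehat{\mathcal T}_{v}$ --- your ``no coincidence'' term --- whose contribution is pinned down by Lemma~\ref{lem17}, matching your $q_nq'_n$ computation. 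Your ``borderline family'' argument (both partial injections fixed-point-free and made of transpositions forces $c$ to be a reflection $j\mapsto\gamma-j$, and then one of $j\mapsto\gamma-j$, $j\mapsto\gamma+1-j$ must have a fixed point on $\mathbb Z/v\mathbb Z$) is precisely the content of the paper's Lemma~\ref{l14}, proved there by a minimal-index contradiction rather than via the reflection description; your version is arguably more transparent. The only substantive difference is packaging: the paper imports a graph framework from earlier work and proves the bounds once for all equivalence classes, whereas you rebuild the same estimates ad hoc in the language of coincidences and embeddings --- more elementary for the first moment, but the paper's framework is what carries over cleanly to the mixed moments needed for Theorem~\ref{cor}.
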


Note that when one of the permutations $\sigma_n^\ell$ follows the uniform distribution, under $H_1$, the product also follows the uniform distribution and  Theorem~\ref{cor} is a direct consequence of Theorem~\ref{thm-B}. 

Our motivation to understand the cycle structure of random permutations is the relation, in the case of conjugation invariant permutations, to the longest common subsequence (LCS) of two permutations. For example, for $m =2,$ if $\sigma_n^{-1}\rho_n$ is conjugation invariant and 
\begin{align*}
 \frac{\#(\sigma_n^{-1}\rho_n)}{\sqrt[6]{n}} \xrightarrow[n\to \infty]{d} 0.
\end{align*} Then for any $s\in \mathbb{R}$,
$$\p\left(\frac{LCS(\sigma_n,\rho_n)-2\sqrt{n}}{{\sqrt[6]{n}}}\leq s\right)\xrightarrow[n\to \infty]{} F_2(s),$$
where $F_2$ is the cumulative distribution function of the GUE Tracy-Widom distribution.

Another motivation comes from traffic distributions,  a non-commutative probability theory introduced by \cite{2011arXiv1111.4662M} to understand the moments of 
permutation invariant random matrices. As shown in \citep{2011arXiv1111.4662M}, the limit in traffic distribution of uniform permutation matrices is trivial but Theorem~\ref{thm-B} can be seen as a second-order result in this framework. It is therefore natural to ask about limiting joint fluctuations for the  
product of several permutation matrices, which is a really non-commutative case.
To emphasize this relation, we rewrite Theorem~\ref{cor} as follows.
\begin{corollary} \label{cor2}
Under $H_1$, $H_2$, $H_3$ and $H_4$,
 for any $k\geq 1$,
$\left(\mathrm{tr}(\prod_{i=1}^m\sigma_n^\ell),\mathrm{tr}((\prod_{i=1}^m\sigma_n^\ell)^2),\dots,\mathrm{tr}((\prod_{i=1}^m\sigma_n^\ell)^k)\right)$ converges in distribution to $(\xi_1,\xi_1+ 2\xi_2,\dots,\sum_{d|k} d\xi_d)$, where $\xi_1,\xi_2,\dots$ are independent and the distribution of $\xi_d$ is Poisson of parameter $\frac 1d$.

\end{corollary}
The optimality of conditions $H_3$ and $H_4$ will be discussed at the end of the paper. 
\paragraph*{Acknowledgements :}
The first author would like to acknowledge  a useful discussion with Camille Male  about traffic distributions. This work is partially supported  by the Labex CEMPI (ANR-11-LABX-0007-01). 
\section{Proof of results}
We begin with a few preliminary remarks and simplifications.

First of all, the equivalence between Theorem~\ref{cor} and Corollary~\ref{cor2} is due to the following classical argument. For any $\sigma \in \s$, if $c_i(\sigma)$ denotes the length of the cycle of $\sigma$ containing $i,$
\begin{equation} \label{trace-cycles}
    \textrm{tr}(\sigma^k)= \sum_{i=1}^n \mathbbm{1}_{\sigma^k(i)=i}=\sum_{i=1}^n \mathbbm{1}_{c_i(\sigma)|k} = \sum_{j|k}\sum_{i=1}^n\mathbbm{1}_{c_i(\sigma)=j}=\sum_{j|k} j \, \#_j \sigma.
\end{equation}

In the hypothesis $H_2,$ we assume that one of the  permutations, say  $\sigma^1_n,$ may not have a  conjugation invariant distribution. In fact, it is enough to
 prove of Theorem~\ref{cor} in the case where all permutations are conjugation invariant. Indeed, if we choose $\tau_n$ uniform and independent of  the $\sigma$-algebra generated by $(\sigma^\ell_n)_{1\leq \ell\leq  m}$, the cycle structure of $\prod_{\ell=1}^m \sigma^\ell_n$ is the same as  
\begin{align*}
    \tau_n^{-1}\left(\prod_{\ell=1}^m \sigma^\ell_n\right)\tau_n=(\tau_n^{-1}\sigma_n^{1}\tau_n) \prod_{\ell=2}^m (\tau_n^{-1} \sigma^\ell_n\tau_n)\overset{d}{=}(\tau_n^{-1}\sigma_n^{1}\tau_n) \prod_{\ell=2}^m  \sigma^\ell_n
\end{align*}
and $(\tau_n^{-1}\sigma^1_n\tau_n)$ is also conjugation invariant.

We will prove in full details the case $m=2$ and indicate briefly at the end of the paper how to extend the proof to a larger number of permutations. In the sequel, $\sigma_n^1$ and $\sigma_n^2$ will be denoted respectively by $\sigma_n$ and $\rho_n.$


\subsection{Preliminary results}

To prove Theorem~\ref{cor}, we will use the same objects introduced in \citep[pages  12-13]{kam2} where one can get further details and examples.  To a couple of permutations and a subset of $p$ indices, we will associate a set of $2p$ graphs. For technical reasons, we prefer working with $\sigma_n^{-1}\rho_n$ rather than 
$\sigma_n\rho_n :$ for any $k \ge 1,$ we define $\tilde{t}_k^n:=\#_k(\sigma_n^{-1}\rho_n).$
Under $H_2$,  $\sigma_n\overset{d}=\sigma^{-1}_n$ and consequently under $H_1$ and $H_2$, $\forall k\geq 1$ $(t_1^n,t_2^n,\dots,t_k^n)$ and $(\tilde{t}_1^n,\tilde{t}_2^n,\dots,\tilde{t}_k^n)$  have the same distribution. 

Let us now recall the combinatorial objects we will use. 
\begin{itemize}
 \item We denote by $\mathbb{G}^n_k$ the set of oriented simple graphs  with vertices $\{1,2,\dots,n\}$ and having exactly $k$ edges. Given  $g\in\mathbb{G}^n_k$, we denote by $E_g$ the set of its edges and by $A_g:=[\mathbbm{1}_{(i,j)\in E_g}]_{1\leq i,j\leq n}$ its adjacency matrix. 
 \item A connected component of $g$ is called \textit{trivial} if it does not have any edge and a vertex $i$ of $g$ is called \textit{isolated} if $E_g$ does not contain any edge of the form $(i,j)$ or $(j,i)$ nor a loop $(i,i)$. Let $g\in\mathbb{G}^n_k $, we denote by  $\tilde{g}$  the graph obtained  from  $g$  after removing isolated vertices.
 \item We say that  two  oriented simple graphs $g_1$ and $g_2$ are \textit{isomorphic} if  one can obtain  $g_2$ by changing the labels of the vertices of $g_1$. In particular, if  $g_1,g_2\in\mathbb{G}^n_k$  then $g_1,g_2$ are isomorphic if and  only if   there exists a permutation matrix $\sigma$ such that  $A_{g_1}\sigma=\sigma A_{g_2}$.
 \item Let $\mathcal{R}$ be the equivalence relation  such that $g_1\mathcal{R} g_2$ if  $\tilde{g}_1$ and $\tilde{g}_2$ are isomorphic.
 We denote by  $\hat{\mathbb{G}}_k:=\bigslant{\cup_{n\geq1} \mathbb{G}_k^n}{\mathcal{R}}
$   the set of  equivalence classes of $\cup_{n\geq1} \mathbb{G}_k^n$  for the relation $\mathcal{R}$.
\end{itemize}

Let $n\in \mathbb{N}^*$  and $\sigma,\rho\in\s.$ Let $m \in \{1, \ldots, n\}$ be fixed.
\begin{itemize}
 \item We denote by $(i^m_1(\sigma,\rho)=m,i^m_2(\sigma,\rho),\dots,i^m_{k_m(\sigma,\rho)}(\sigma,\rho))$  the cycle  of $\sigma^{-1}\circ\rho$ containing $m,$ so that
 $k_m(\sigma,\rho):=c_m(\sigma^{-1}\circ\rho)$ is the length of this cycle. For $i \le k_m(\sigma, \rho) ,$ we define $j_l^m(\sigma,\rho):=\rho(i^m_l(\sigma,\rho))$. 
 In particular, $i_1^m(\sigma,\rho),i_2^m(\sigma,\rho),\dots,i_{k_m(\sigma,\rho)}^m(\sigma,\rho)$ are pairwise distinct and  $j_1^m(\sigma,\rho),j_2^m(\sigma,\rho),\dots,$ \linebreak $j_{k_m(\sigma,\rho)}^m(\sigma,\rho)$  are pairwise distinct.
For sake of simplicity, when it is clear, we will use the notations $k_m$, $i^m_l$ and $j^m_l$ instead of $k_m(\sigma,\rho)$, $i^m_l(\sigma,\rho)$ and $j^m_l(\sigma,\rho)$.
\item We denote by   $\mathcal{G}_1^m (\sigma,\rho) \in \mathbb{G}_{k_m}^n$ and $\mathcal{G}_2^m(\sigma,\rho)\in \mathbb{G}_{k_m}^n$  the  graphs with vertices  $\{1, \ldots, n\}$ such that 
\[ E_{\mathcal{G}_1^m (\sigma,\rho)}=\{(i^m_1,j^m_{k_m})\} \bigcup \left(\bigcup_{l=1}^{k_m-1}{\{(i^m_{l+1},j^m_{l})\}}\right)  \quad \textrm{ and } \quad
E_{\mathcal{G}_2^m (\sigma,\rho)}=\bigcup_{l=1}^{k_m}{\{(i^m_l,j^m_{l})\}}\]
and by $g_\sigma$ the graph such that $A_{g_\sigma}=\sigma$. 
By construction, for any positive  integer $m\leq n$, $\mathcal{G}_1^m(\sigma,\rho)$ (resp. $\mathcal{G}_2^m(\sigma,\rho)$) is a sub-graph of $g_\sigma$ (resp. $g_\rho$).
Moreover, we want to emphasize that $\mathcal{G}_1^m(\sigma,\rho)$ and $\mathcal{G}_2^m(\sigma,\rho)$ have the same set of non-isolated vertices.

For $i\in\{1,2\}$, let 
$\hat{\mathcal{G}}^m_i(\sigma,\rho)$ be the  equivalence class of $\mathcal{G}^m_i(\sigma,\rho)$. 
\item Let $I=(s_1,s_2,\dots,s_l)$ a set of distinct indices of $\{1, \ldots,n\}.$ We denote by $$\mathcal{G}^I(\sigma,\rho)=(\mathcal{G}^{s_1}_1(\sigma,\rho),\mathcal{G}^{s_1}_2(\sigma,\rho),\mathcal{G}^{s_2}_1(\sigma,\rho),\dots,\mathcal{G}^{s_l}_1(\sigma,\rho),\mathcal{G}^{s_l}_2(\sigma,\rho))$$
and 
$$\hat{\mathcal{G}}^I(\sigma,\rho)=(\hat{\mathcal{G}}^{s_1}_1(\sigma,\rho),\hat{\mathcal{G}}^{s_1}_2(\sigma,\rho),\hat{\mathcal{G}}^{s_2}_1(\sigma,\rho),\dots,\hat{\mathcal{G}}^{s_l}_1(\sigma,\rho),\hat{\mathcal{G}}^{s_l}_2(\sigma,\rho)).$$
\item For $i \in \{1,2\}$, let $\mathcal{G}_i^{\{1,2,\dots,k\}}(\sigma,\rho)$  be the graph such that $E_{\mathcal{G}_i^{\{1,2,\dots,k\}}(\sigma,\rho)}=\cup_{l=1}^k E_{\mathcal{G}_i^{\ell}(\sigma,\rho)}$ 
and
$\hat{\mathcal{G}}_i^{\{1,2,\dots,k\}}(\sigma,\rho)$ be the equivalence class of $\mathcal{G}_i^{\{1,2,\dots,k\}}(\sigma,\rho)$.

\end{itemize}

Using the conjugation invariance and the relation \eqref{trace-cycles},
Theorem~\ref{cor} is equivalent to the following: under the same hypotheses, for any $v_1,v_2,v_3,\dots,v_k \geq 1$,
\begin{align}\label{st}\tag{*}
\lim_{n\to \infty}  \sum_{ \substack{  \hat{g}_i,\hat{g}'_i \in \hat{\mathbb{G}}_{v_i}, \, 1\leq i\leq k }}n^k
\p\left(\hat{\mathcal{G}}^{\{1,2,\dots,k\}}(\sigma_n,\rho_n)=(\hat{g}_1,\hat{g}'_1,\hat{g}_2,\dots \hat{g}'_k )\right)=C_{v_1,v_2,\dots,v_k},
\end{align}
where $C_{v_1,v_2,\dots,v_k}$ is a constant independent of the laws of the permutations. Note that, for any $v_i \geq 1,$ $\hat{\mathbb{G}}_{v_i}$ and therefore 
the number of terms of the sum is finite.
\\ For example, if we take $P(x)=x^2$, we have
\begin{align*}
    \E\left(P\left(\hat{t}^n_1\right)\right)= \E\left(\left(\sum_{i=1}^n \mathbbm{1}_{c_i\left(\sigma^{-1}\circ\rho\right)=1}\right)^2\right)
    &= \sum_{i=1}^n \E\left(\mathbbm{1}_{c_i\left(\sigma^{-1}\circ\rho\right)=1}\right) +    \sum_{i\neq j}^n \E\left(\mathbbm{1}_{c_i\left(\sigma^{-1}\circ\rho\right)=1}\mathbbm{1}_{c_j\left(\sigma^{-1}\circ\rho\right)=1}\right)
    \\&= n\E\left(\mathbbm{1}_{c_1\left(\sigma^{-1}\circ\rho\right)=1}\right)+ (n^2-n) \E\left(\mathbbm{1}_{c_1\left(\sigma^{-1}\circ\rho\right)=1}\mathbbm{1}_{c_2\left(\sigma^{-1}\circ\rho\right)=1}\right)
    \\ &\xrightarrow[n\to\infty]{} C_1+C_{1,1}=1+1=2
\end{align*} 
Similarly, if we take $P(x,y)=xy$, we obtain $\E(P(\hat{t}^n_1,\hat{t}^n_2))\xrightarrow[n\to\infty]{d} C_{1,2}=C_{2,1}=1.$

Before getting into the proof of \eqref{st}, let us gather some useful combinatorial and then probabilistic results.
\begin{lemma}
\cite[Lemma 15]{kam2}
\label{lem20}If  ${m_1}\in \{i^{m_2}_l, 1\leq l\leq k_{m_2}\}$, then $\mathcal{G}^{m_1}_1(\sigma,\rho)=\mathcal{G}^{m_2}_1(\sigma,\rho)$ and $\mathcal{G}^{m_1}_2(\sigma,\rho)=\mathcal{G}^{m_2}_2(\sigma,\rho)$.
\end{lemma}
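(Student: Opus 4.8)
The plan is to unpack the definitions and show that the two permutations $\sigma^{-1}\rho$ "see" the same cycle through $m_1$ and through $m_2$. Concretely, suppose $m_1 = i^{m_2}_{l_0}$ for some $1 \le l_0 \le k_{m_2}$, i.e.\ $m_1$ lies on the cycle of $\sigma^{-1}\rho$ containing $m_2$. First I would recall that the sequence $(i^{m_2}_1 = m_2, i^{m_2}_2, \dots, i^{m_2}_{k_{m_2}})$ is by construction the orbit of $m_2$ under $\sigma^{-1}\rho$, listed in the order $i^{m_2}_{l+1} = (\sigma^{-1}\rho)(i^{m_2}_l)$ with the cyclic convention $i^{m_2}_{k_{m_2}+1} = i^{m_2}_1$. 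Since $m_1$ is on this same orbit, the orbit of $m_1$ is the same \emph{set}, and the ordered sequence $(i^{m_1}_1 = m_1, i^{m_1}_2, \dots)$ is just the cyclic rotation of $(i^{m_2}_1,\dots,i^{m_2}_{k_{m_2}})$ that starts at $m_1$; in particular $k_{m_1} = k_{m_2}$ and $i^{m_1}_l = i^{m_2}_{l + l_0 - 1}$ (indices mod $k_{m_2}$). Consequently $j^{m_1}_l = \rho(i^{m_1}_l) = \rho(i^{m_2}_{l+l_0-1}) = j^{m_2}_{l+l_0-1}$, so the two families of $(i,j)$-pairs coincide after a cyclic shift.

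The second step is to check that this cyclic shift does not change the edge sets. For $\mathcal{G}_2$ this is immediate: $E_{\mathcal{G}_2^{m}(\sigma,\rho)} = \bigcup_{l=1}^{k_m} \{(i^m_l, j^m_l)\}$ is a union over the whole cycle, so reindexing $l \mapsto l + l_0 - 1$ cyclically permutes the terms of the union and leaves the set unchanged; hence $\mathcal{G}_2^{m_1}(\sigma,\rho) = \mathcal{G}_2^{m_2}(\sigma,\rho)$. For $\mathcal{G}_1$ one has $E_{\mathcal{G}_1^m(\sigma,\rho)} = \{(i^m_1, j^m_{k_m})\} \cup \bigcup_{l=1}^{k_m-1}\{(i^m_{l+1}, j^m_l)\}$; the point is that this is exactly the set $\bigcup_{l=1}^{k_m} \{(i^m_{l+1}, j^m_l)\}$ with the convention $i^m_{k_m+1} = i^m_1$, i.e.\ a union over all consecutive pairs around the cycle of the edge from $i$-successor to $j$. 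Again this description is manifestly invariant under a cyclic rotation of the starting point, so $\mathcal{G}_1^{m_1}(\sigma,\rho) = \mathcal{G}_1^{m_2}(\sigma,\rho)$.

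I do not anticipate a serious obstacle here: the whole content is bookkeeping with the cyclic indexing, and the only thing to be careful about is writing $E_{\mathcal{G}_1}$ in the rotation-invariant "all consecutive pairs" form rather than the asymmetric form in which the definition is stated, so that cyclic invariance becomes visible. One should also note explicitly that $k_{m_1} = k_{m_2}$, since the statement implicitly uses it. Since the identity being proved is for the graphs on the labelled vertex set $\{1,\dots,n\}$ (not merely up to isomorphism), one must indeed verify equality of edge sets as above rather than invoking any relabelling, but this is precisely what the cyclic-shift argument delivers.
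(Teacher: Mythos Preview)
Your proof is correct. The paper itself does not supply a proof of this lemma---it is quoted directly from \cite[Lemma 15]{kam2}---so there is no in-paper argument to compare against; your cyclic-shift bookkeeping is exactly the natural (and presumably the original) verification, and the key observation you highlight, rewriting $E_{\mathcal{G}_1^m}$ as $\bigcup_{l=1}^{k_m}\{(i^m_{l+1},j^m_l)\}$ with the convention $i^m_{k_m+1}=i^m_1$ so that rotational invariance becomes transparent, is the right way to handle the one step that is not completely mechanical.
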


\begin{lemma}\label{sym}
For any $m \leq n $, for any permutation $\sigma,\rho \in \s$,
\begin{align*}
k_m(\rho,\sigma)&=k_m(\sigma,\rho),
\\
j^m_\ell(\rho,\sigma)&=j^{m}_{k_m(\sigma,\rho)-\ell+1}(\sigma,\rho), \ \forall  1\leq \ell \leq k_m(\sigma,\rho),
\\
i^m_\ell(\rho,\sigma)&=i^{m}_{k_m(\sigma,\rho)-\ell+2}(\sigma,\rho), \ \forall  2\leq \ell \leq k_m(\sigma,\rho),
\\
i^m_1(\rho,\sigma)&=i^{m}_{1}(\sigma,\rho)=m,
\\ A_{\mathcal{G}_1^{m}(\sigma,\rho)}&=A_{\mathcal{G}_2^{\rho(m)}(\rho^{-1},\sigma^{-1})}^T.\end{align*}

\end{lemma}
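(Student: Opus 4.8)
The plan is to reduce every claim to elementary manipulations with the permutation $\pi:=\sigma^{-1}\circ\rho$, whose cycle through $m$ is exactly $(i^m_1(\sigma,\rho),\dots,i^m_{k_m(\sigma,\rho)}(\sigma,\rho))$; equivalently $i^m_l(\sigma,\rho)=\pi^{\,l-1}(m)$ for $1\le l\le k_m(\sigma,\rho)$, all subscripts being read cyclically with representatives in $\{1,\dots,k_m(\sigma,\rho)\}$. The starting point is that the permutation attached to the reversed pair $(\rho,\sigma)$ is $\rho^{-1}\circ\sigma=\pi^{-1}$. Since $\pi$ and $\pi^{-1}$ have the same cycles, traversed in opposite directions, and $\pi^{k_m(\sigma,\rho)}(m)=m$, this immediately gives $k_m(\rho,\sigma)=k_m(\sigma,\rho)$ together with
\[
  i^m_\ell(\rho,\sigma)=\pi^{-(\ell-1)}(m)=\pi^{\,k_m(\sigma,\rho)-\ell+1}(m)=i^m_{k_m(\sigma,\rho)-\ell+2}(\sigma,\rho),
\]
which is the fourth identity when $\ell=1$ (the subscript wraps back to $1$) and the third when $2\le\ell\le k_m(\sigma,\rho)$.

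For the second identity I would combine this with the definition $j^m_\ell(\rho,\sigma)=\sigma\bigl(i^m_\ell(\rho,\sigma)\bigr)$ and the relation $\sigma\circ\pi=\rho$, which yields
\[
  j^m_\ell(\rho,\sigma)=\sigma\bigl(\pi^{\,k_m(\sigma,\rho)-\ell+1}(m)\bigr)=\rho\bigl(\pi^{\,k_m(\sigma,\rho)-\ell}(m)\bigr)=j^m_{k_m(\sigma,\rho)-\ell+1}(\sigma,\rho).
\]

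For the adjacency-matrix identity I would run the analogous computation for the pair $(\rho^{-1},\sigma^{-1})$, whose attached permutation is $\rho\circ\sigma^{-1}=\rho\,\pi\,\rho^{-1}$. Conjugating the cycle of $\pi$ through $m$ by $\rho$ shows that $k_{\rho(m)}(\rho^{-1},\sigma^{-1})=k_m(\sigma,\rho)$ and $i^{\rho(m)}_l(\rho^{-1},\sigma^{-1})=(\rho\,\pi\,\rho^{-1})^{l-1}(\rho(m))=\rho\bigl(\pi^{\,l-1}(m)\bigr)=j^m_l(\sigma,\rho)$; plugging this into $j^{\rho(m)}_l(\rho^{-1},\sigma^{-1})=\sigma^{-1}\bigl(i^{\rho(m)}_l(\rho^{-1},\sigma^{-1})\bigr)$ and using $\sigma^{-1}\circ\rho=\pi$ gives $j^{\rho(m)}_l(\rho^{-1},\sigma^{-1})=\pi^{\,l}(m)=i^m_{l+1}(\sigma,\rho)$. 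Writing $E_{\mathcal{G}_1^m(\sigma,\rho)}=\{(i^m_{l+1},j^m_l):1\le l\le k_m(\sigma,\rho)\}$ with the cyclic convention $i^m_{k_m+1}=i^m_1$, and $E_{\mathcal{G}_2^{\rho(m)}(\rho^{-1},\sigma^{-1})}=\{(j^m_l,i^m_{l+1}):1\le l\le k_m(\sigma,\rho)\}$, one sees the two edge lists are obtained from one another by reversing every edge, so that $A_{\mathcal{G}_1^m(\sigma,\rho)}=A_{\mathcal{G}_2^{\rho(m)}(\rho^{-1},\sigma^{-1})}^{T}$, both graphs having vertex set $\{1,\dots,n\}$.

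The content here is light: the only thing requiring care is the bookkeeping — keeping track of which of the two permutations of a pair gets inverted in the attached product, handling the cyclic wrap-around of the indices (the $+1$ versus $+2$ shifts), and matching the edge orientation with the convention $A_{g_\sigma}=\sigma$. I expect the index shift in the first displayed identity to be the fussiest point, with nothing being a genuine obstacle.
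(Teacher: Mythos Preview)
Your proof is correct. The paper states this lemma without proof, treating it as a routine consequence of the definitions, and your argument supplies exactly the elementary manipulations one expects: writing $i^m_l(\sigma,\rho)=\pi^{l-1}(m)$ with $\pi=\sigma^{-1}\rho$, observing that swapping the pair replaces $\pi$ by $\pi^{-1}$ and that passing to $(\rho^{-1},\sigma^{-1})$ conjugates $\pi$ by $\rho$, and then reading off the index shifts and the edge reversal. The cyclic bookkeeping (the $+1$/$+2$ offsets and the wrap-around $i^m_{k_m+1}=i^m_1$) is handled cleanly, so there is nothing to correct.
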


\pagebreak

\begin{lemma}\label{l14}
If all non trivial connected components of $\mathcal{G}^{m_1}_1(\sigma,\rho)$ and $\mathcal{G}^{m_1}_2(\sigma,\rho)$ have $2$ vertices then 
both $\mathcal{G}^{m_1}_1(\sigma,\rho)$ and $\mathcal{G}^{m_1}_2(\sigma,\rho)$ have no 2-cycles .
\end{lemma}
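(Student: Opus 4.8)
The plan is to realise the two graphs concretely and then argue on the set $S:=\{i^{m_1}_1,\dots,i^{m_1}_{k_{m_1}}\}$, which by definition is the cycle of $\pi:=\sigma^{-1}\circ\rho$ through $m_1$; write $k:=k_{m_1}$. Along that cycle $\rho(i^{m_1}_\ell)=\sigma(i^{m_1}_{\ell+1})$, so the edge set of $\mathcal{G}^{m_1}_1(\sigma,\rho)$ is exactly $\{(v,\sigma(v)):v\in S\}$ and that of $\mathcal{G}^{m_1}_2(\sigma,\rho)$ is $\{(v,\rho(v)):v\in S\}$; put $J:=\rho(S)=\sigma(S)$, so that both graphs have $S\cup J$ as their set of non-isolated vertices and are disjoint unions of directed paths and directed cycles (in- and out-degrees are at most one).

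The first step is to unpack the hypothesis. A loop at some $v\in S$ would be a non-trivial component on a single vertex (it is the only edge meeting $v$), so $\sigma$ and $\rho$ have no fixed point in $S$. Next, if $v\in S$ and $\sigma(v)\in S$, the component of $v$ in $\mathcal{G}^{m_1}_1(\sigma,\rho)$ already contains $v,\sigma(v),\sigma^2(v)$; these are pairwise distinct unless $\sigma^2(v)=v$, so $v\in S$ and $\sigma(v)\in S$ force $\sigma^2(v)=v$, and symmetrically $v\in S$ and $\rho(v)\in S$ force $\rho^2(v)=v$. Consequently $\mathcal{G}^{m_1}_1(\sigma,\rho)$ has a $2$-cycle iff $\sigma$ has a transposition with support in $S$ iff $S\cap J\neq\emptyset$, and likewise $\mathcal{G}^{m_1}_2(\sigma,\rho)$ has a $2$-cycle iff $S\cap J\neq\emptyset$. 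So it suffices to prove $S\cap J=\emptyset$.

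The heart of the argument is that $S\cap J$ is $\pi$-invariant. Let $a\in S\cap J$. Since $a\in J=\rho(S)$, the point $c:=\rho^{-1}(a)$ lies in $S$; as $c\in S$ and $\rho(c)=a\in S$, the above gives $\rho^2(c)=c$, i.e.\ $\rho(a)=c$, and then $c=\rho(a)\in\rho(S)=J$, so $c\in S\cap J$. Since $c\in J=\sigma(S)$, the point $d:=\sigma^{-1}(c)$ lies in $S$; as $d\in S$ and $\sigma(d)=c\in S$, we get $\sigma^2(d)=d$, i.e.\ $\sigma(c)=d$, and then $d=\sigma(c)\in\sigma(S)=J$, so $d\in S\cap J$. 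Finally $\pi(a)=\sigma^{-1}(\rho(a))=\sigma^{-1}(c)=d\in S\cap J$, which proves the invariance. Since $\pi$ acts on $S$ as a single cycle, its only $\pi$-invariant subsets are $\emptyset$ and $S$, so either $S\cap J=\emptyset$ (and we are done) or $S\cap J=S$.

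It remains to exclude $S\cap J=S$ by a parity argument. In that case every element of $S$ lies in a transposition of $\sigma$ with support in $S$ and in a transposition of $\rho$ with support in $S$, so $\sigma|_S$ and $\rho|_S$ are fixed-point-free involutions of $S$; in particular $k=|S|$ is even, and since $\sigma(S)=\rho(S)=S$ the permutation $\pi|_S=\sigma|_S^{-1}\circ\rho|_S$ of $S$ is a product of $k$ transpositions, hence even. But $\pi|_S$ is a $k$-cycle, which is odd because $k$ is even — a contradiction. Therefore $S\cap J=\emptyset$, so neither $\mathcal{G}^{m_1}_1(\sigma,\rho)$ nor $\mathcal{G}^{m_1}_2(\sigma,\rho)$ has a $2$-cycle. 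I expect the $\pi$-invariance step to be the only delicate point: it requires alternately invoking the structural consequence of the hypothesis for $\mathcal{G}^{m_1}_2$ and then for $\mathcal{G}^{m_1}_1$, while keeping careful track of which vertex plays which role so that $\pi(a)$ is again recognised as a $2$-cycle vertex; the reduction to $S\cap J$ and the sign computation are routine.
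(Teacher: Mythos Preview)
Your proof is correct and takes a genuinely different route from the paper's.

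The paper first uses Lemmas~\ref{lem20} and~\ref{sym} to reduce to showing that $(1,2)$ and $(2,1)$ cannot both be edges of $\mathcal{G}^1_2(\sigma,\rho)$, and then runs a minimality argument on the set $A=\{\eta>1: j^1_\eta\in\{i^1_1,\dots,i^1_{\eta-1}\}\text{ or }i^1_\eta\in\{j^1_1,\dots,j^1_{\eta-1}\}\}$: assuming a $2$-cycle, $A$ is nonempty, and one shows by a case analysis (using the two-vertex-component hypothesis repeatedly) that $\ell':=\min A$ forces $\ell'-1\in A$, a contradiction. The argument is direct but involves several local case checks on the indices $\ell'',\ell'-1,\ell'-2$.

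Your approach is more structural: you recast the edge sets as $\{(v,\sigma(v)):v\in S\}$ and $\{(v,\rho(v)):v\in S\}$ with $J=\sigma(S)=\rho(S)$, reduce the hypothesis to the single implication ``$v\in S$ and $\sigma(v)\in S\Rightarrow\sigma^2(v)=v$'' (and its $\rho$-analogue), and then observe that the presence of any $2$-cycle is equivalent to $S\cap J\neq\emptyset$. The key step---$\pi$-invariance of $S\cap J$---is a clean two-line alternation between the $\rho$- and $\sigma$-implications, and the single-cycle action of $\pi$ on $S$ immediately forces $S\cap J\in\{\emptyset,S\}$. The parity endgame (two fixed-point-free involutions composing to an even-length cycle) is an elegant way to kill the remaining case, and avoids the index-chasing entirely. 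What the paper's argument buys is that it stays entirely at the level of the sequences $(i_\ell,j_\ell)$ without invoking signatures; what yours buys is transparency and robustness---each step has a clear algebraic meaning, and the proof would adapt more easily if one wanted variants of the lemma.
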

\begin{proof}
Using the symmetries of the problem (Lemmas \ref{lem20} and \ref{sym}), it suffices to prove  that  if all non trivial connected components of $\mathcal{G}^{1}_1(\sigma,\rho)$ and $\mathcal{G}^{1}_2(\sigma,\rho)$ have $2$ vertices then it is impossible to have at the same time   $(1,2)\in \mathcal{G}^{1}_2(\sigma,\rho) $ and $(2,1)  \in \mathcal{G}^{1}_2(\sigma,\rho)$.  To simplify notations, let  $k_1:=k_1(\sigma,\rho)=c_1(\sigma^{-1}\circ\rho)$, $i^1_o:=i^1_o(\sigma,\rho)$ and $j^1_o:=j^1_o(\sigma,\rho)$.

Let $A=\{ \eta>1 ;  j^1_{\eta}\in\{i^1_1,i_2^1,\dots,i_{\eta-1}^1\} \text{ or } i^1_{\eta}\in\{j^1_1,j_2^1,\dots,j_{\eta-1}^1\}  \} $.
 Suppose that $(1,2)\in \mathcal{G}^{1}_2(\sigma,\rho) $ and $(2,1)  \in \mathcal{G}^{1}_2(\sigma,\rho)$ then $k_1\geq 2$ and there exists a unique   $1<l\leq k_1$ such that $i^1_l=2$ and $j^1_l=1$ so that  $A$ is non-empty. 
 Let $\ell' := inf(A)\geq 2 $. Assume that $\ell'>2$. If $j^1_{\ell'}\in\{i^1_1,i_2^1,\dots,i_{\ell'-1}^1\}$, then there exists $\ell''<\ell'$ such that $j^1_{\ell'}=i^1_{\ell''}$ and since the component  of  $\mathcal{G}^{1}_2(\sigma,\rho)$ containing $i^1_{\ell'}$ has two vertices and by definition $(i^1_{\ell'},j^1_{\ell'})$ and $(i^1_{\ell''},j^1_{\ell''})$ are two edges of 
 $\mathcal{G}^{1}_2(\sigma,\rho)$, then $j^1_{\ell''}=i^1_{\ell'}$. 
 Since  $(i^1_{\ell'},j^1_{\ell'-1})=(j^1_{\ell''},j^1_{\ell'-1})$ and
 $(i^1_{\ell''+1},j^1_{\ell''})$ are edges of $\mathcal{G}^{1}_1(\sigma,\rho)$ and  since $\mathcal{G}^{1}_1(\sigma,\rho)$ has only connected components of size $2$, we have necessarily $i^1_{\ell''+1}=j^1_{\ell'-1}$. One can check easily  that $\ell''<\ell'-2$ otherwise either $\mathcal{G}^{1}_1(\sigma,\rho)$ or $\mathcal{G}^{1}_2(\sigma,\rho)$ has a loop.
 Indeed, if $\ell''=\ell'-2$, then $(i^1_{\ell''+1},j^1_{\ell''+1})=(j^1_{\ell'-1},j^1_{\ell''+1} )=(j^1_{\ell'-1},j^1_{\ell'-1} )$ is an edge of  $\mathcal{G}^{1}_2(\sigma,\rho)$ and if $\ell''=\ell'-1$, then $(i^1_{\ell''+1},j^1_{\ell''})=(j^1_{\ell'-1},j^1_{\ell''} )=(j^1_{\ell'-1},j^1_{\ell'-1} )$ 
 is an edge of  $\mathcal{G}^{1}_1(\sigma,\rho)$.
 This implies that $\ell'-1 \in A$, which is absurd. $i^1_{\ell'}\in\{j^1_1,j_2^1,\dots,j_{\ell'-1}^1\}$ can be treated using the same techniques and one can extend easily to  $\ell'=2$.
 \end{proof}

We now introduce the following notation : given  $g\in\mathbb{G}^n_k$, we denote by 
$$\mathfrak{S}_{n,g}:=\{\sigma\in \s; \forall (i,j)\in E_g, \sigma(i)=j \}.$$
In other words, $\mathfrak{S}_{n,g}$ is the set of permutations $\sigma$ such that $g$ is a sub-graph of $g_\sigma$.
It is not difficult to prove the two following lemmas.
\begin{lemma}\label{lem:prob}
Let $g_1,g'_1,g_2,\dots,g'_k \in \cup_{\ell} {\mathbb{G}}^n_\ell$ and let $g,g'$ be such that 
$E_g=\cup_{\ell=1}^k E_{g_i}$ and $E_{g'}=\cup_{\ell=1}^k E_{g'_i}$. 
Assume that there exists $\rho,\sigma$ such that
$$\mathcal{G}^{\{1,2,\dots,k\}}(\sigma,\rho)=(g_1,g'_1,g_2,\dots,g'_k ).$$
Then for any random permutation $\rho_n,\sigma_n$, 
\begin{align*}
   \p\left(\bigcap_{i=1}^k\{\sigma_n \in \mathfrak{S}_{n,g_i},\rho_n \in \mathfrak{S}_{n,g'_i} \}\right)
&=\p\left(\mathcal{G}^{\{1,2,\dots,k\}}(\sigma_n,\rho_n)=(g_1,g'_1,g_2,\dots,g'_k )\right)\\&= \p\left(\mathcal{G}_1^{\{1,2,\dots,k\}}(\sigma_n,\rho_n)=g,\mathcal{G}_2^{\{1,2,\dots,k\}}(\sigma_n,\rho_n)=g'\right).
\end{align*}
\end{lemma}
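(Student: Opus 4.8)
The plan is to show that the three events whose probabilities are compared are in fact the \emph{same} subset of the underlying probability space, so that equality of the probabilities is automatic. Denote them, in the order they appear, by event (A) $=\bigcap_{i=1}^k\{\sigma_n\in\mathfrak{S}_{n,g_i},\ \rho_n\in\mathfrak{S}_{n,g'_i}\}$, event (B) $=\{\mathcal{G}^{\{1,\dots,k\}}(\sigma_n,\rho_n)=(g_1,g'_1,\dots,g'_k)\}$, and event (C) $=\{\mathcal{G}_1^{\{1,\dots,k\}}(\sigma_n,\rho_n)=g,\ \mathcal{G}_2^{\{1,\dots,k\}}(\sigma_n,\rho_n)=g'\}$. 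First I would fix once and for all a realizing pair $(\sigma,\rho)$ as in the hypothesis and, for $1\le i\le k$, write $(i^i_1=i,i^i_2,\dots,i^i_{k_i})$ for the cycle of $\sigma^{-1}\circ\rho$ through $i$; by the defining formulas, $E_{g'_i}=\{(i^i_l,\rho(i^i_l)):1\le l\le k_i\}$ and $E_{g_i}=\{(i^i_1,\rho(i^i_{k_i}))\}\cup\{(i^i_{l+1},\rho(i^i_l)):1\le l\le k_i-1\}$, while $E_g=\bigcup_i E_{g_i}$ and $E_{g'}=\bigcup_i E_{g'_i}$.

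Next I would dispatch the easy inclusions. On event (B), each $\mathcal{G}_1^i(\sigma_n,\rho_n)$ (resp. $\mathcal{G}_2^i(\sigma_n,\rho_n)$) is by construction a subgraph of $g_{\sigma_n}$ (resp. $g_{\rho_n}$), so (B) implies (A); taking the union of the edge sets also gives $\mathcal{G}_1^{\{1,\dots,k\}}(\sigma_n,\rho_n)=g$ and $\mathcal{G}_2^{\{1,\dots,k\}}(\sigma_n,\rho_n)=g'$, so (B) implies (C). Conversely, on event (C) we have $\sigma_n\in\mathfrak{S}_{n,g}$ and $\rho_n\in\mathfrak{S}_{n,g'}$, and since $E_{g_i}\subseteq E_g$ and $E_{g'_i}\subseteq E_{g'}$, this already gives (C) implies (A). It therefore remains only to show (A) implies (B), which closes the loop $(B)\Rightarrow(A)\Rightarrow(B)$ together with $(B)\Rightarrow(C)\Rightarrow(A)$.

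For the implication (A) $\Rightarrow$ (B), assume $\sigma_n\in\mathfrak{S}_{n,g_i}$ and $\rho_n\in\mathfrak{S}_{n,g'_i}$ for every $i$, and fix $i$. From $\rho_n\in\mathfrak{S}_{n,g'_i}$ I read off $\rho_n(i^i_l)=\rho(i^i_l)$ for all $1\le l\le k_i$, and from $\sigma_n\in\mathfrak{S}_{n,g_i}$ I read off $\sigma_n(i^i_{l+1})=\rho(i^i_l)$ for $1\le l\le k_i-1$ and $\sigma_n(i^i_1)=\rho(i^i_{k_i})$. Composing, $\sigma_n^{-1}\circ\rho_n$ sends $i^i_l\mapsto i^i_{l+1}$ cyclically; since the $i^i_l$ are pairwise distinct, $(i^i_1,\dots,i^i_{k_i})$ is exactly the cycle of $\sigma_n^{-1}\circ\rho_n$ containing $i=i^i_1$. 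Hence $k_i(\sigma_n,\rho_n)=k_i$, $i^i_l(\sigma_n,\rho_n)=i^i_l$ and $j^i_l(\sigma_n,\rho_n)=\rho_n(i^i_l)=\rho(i^i_l)=j^i_l(\sigma,\rho)$, and substituting these into the definitions of $\mathcal{G}_1^i$ and $\mathcal{G}_2^i$ yields $\mathcal{G}_1^i(\sigma_n,\rho_n)=g_i$ and $\mathcal{G}_2^i(\sigma_n,\rho_n)=g'_i$. As $i$ was arbitrary, this is precisely event (B).

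The only genuine content is this last step, and it is also the single place where the hypothesis that a realizing pair $(\sigma,\rho)$ exists is used: it guarantees that the prescribed edges of $g_i$ and $g'_i$ really are laid out along one cycle in the mutually compatible pattern that makes $\sigma_n^{-1}\circ\rho_n$ close up correctly; absent such a pair the statement would be vacuous. Everything else is routine bookkeeping with the definitions of $\mathfrak{S}_{n,g}$, of $\mathcal{G}_1^m$ and $\mathcal{G}_2^m$, and of their unions, so I do not expect any serious obstacle beyond keeping the indices straight. An alternative, slightly slicker route to the same conclusion is to observe that, in light of Lemma~\ref{lem20}, the graphs $\mathcal{G}_1^i(\sigma_n,\rho_n)$ for distinct cycles have disjoint sets of non-isolated vertices, so that the individual pieces can be recovered from the union graph; but the direct cycle-tracing argument above is the cleanest.
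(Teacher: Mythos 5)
Your proof is correct and follows essentially the same route as the paper: the key step in both is the cycle-tracing argument showing that on $\bigcap_{i=1}^k\{\sigma_n\in\mathfrak{S}_{n,g_i},\ \rho_n\in\mathfrak{S}_{n,g'_i}\}$ the existence of a realizing pair forces $\mathcal{G}^i_1(\sigma_n,\rho_n)=g_i$ and $\mathcal{G}^i_2(\sigma_n,\rho_n)=g'_i$, so that the events coincide. Your closing of the loop $(B)\Rightarrow(C)\Rightarrow(A)\Rightarrow(B)$ is a tidy way of also covering the second equality, which the paper simply asserts follows by the same argument.
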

\begin{proof} We will only prove the first equality. The second one  can be obtained using the same argument. 

Let $\sigma',\rho'$ be two permutations.  We have seen that $\mathcal{G}^m_2(\sigma',\rho')$ is a subset of $g_{\rho'},$ so that
$$\mathcal{G}^m_2(\sigma',\rho')=g'_m \Rightarrow \rho' \in \mathfrak{S}_{n,g'_m}, $$
and that  $\mathcal{G}^m_1(\sigma',\rho')$ is a subset of $g_{\sigma'},$ so that 
$$\mathcal{G}^m_1(\sigma',\rho')=g_m \Rightarrow \sigma' \in \mathfrak{S}_{n,g_m}. $$
Consequently, 
$$\p\left(\mathcal{G}^{\{1,2,\dots,k\}}(\sigma_n,\rho_n)=(g_1,g'_1,g_2,\dots,g'_k )\right)\leq
\p\left(\bigcap_{i=1}^k\{\sigma_n \in \mathfrak{S}_{n,g_i},\rho_n \in \mathfrak{S}_{n,g'_i} \}\right).
$$
Now suppose that there exists $\rho',\sigma'$ such that
$$\mathcal{G}^{\{1,2,\dots,k\}}(\sigma',\rho')=(g_1,g'_1,g_2,\dots,g'_k ).$$
Let $\sigma,\rho$ such that  $\sigma \in \cap_{i=1}^k \mathfrak{S}_{n,g_i}$ and  $\rho \in \cap_{i=1}^k \mathfrak{S}_{n,g'_i}$. 
 By definition and  by iteration on $\ell$, one can check  that 
for any $\ell' \leq k$,  $i^\ell_{\ell'}(\sigma',\rho')=i^\ell_{\ell'}(\sigma,\rho)$ and $j^\ell_{\ell'}(\sigma',\rho')=j^\ell_{\ell'}(\sigma,\rho)$. Consequently, 
$$\mathcal{G}^{\{1,2,\dots,k\}}(\sigma,\rho)=(g_1,g'_1,g_2,\dots,g'_k ).$$
Finally we obtain 
$$\p\left(\mathcal{G}^{\{1,2,\dots,k\}}(\sigma_n,\rho_n)=(g_1,g'_1,g_2,\dots,g'_k )\right)\geq
\p\left(\bigcap_{i=1}^k\{\sigma_n \in \mathfrak{S}_{n,g_i},\rho_n \in \mathfrak{S}_{n,g'_i} \}\right).
$$
\end{proof}

\begin{lemma}\cite[Lemma 16]{kam2}\label{lemma15}
Let $g_1, g_2 \in \mathbb{G}^n_k$. Assume that there exists  $\rho \in \s$ 
such that $A_{g_2}\rho=\rho A_{g_1}$. If $\rho$ has  a fixed point on  any non-trivial connected component  of $g_1$, then $\mathfrak{S}_{n,g_1}\cap\mathfrak{S}_{n,g_2}=\emptyset $ or $A_{g_1}=A_{g_2}$.
\end{lemma}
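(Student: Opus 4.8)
The plan is to argue contrapositively along each non-trivial connected component separately, exploiting the intertwining relation $A_{g_2}\rho = \rho A_{g_1}$, which says precisely that $\rho$ conjugates the adjacency structure of $g_1$ into that of $g_2$, i.e. $(i,j)\in E_{g_1} \iff (\rho(i),\rho(j))\in E_{g_2}$. First I would fix a non-trivial connected component $\mathcal{C}$ of $g_1$ and let $v\in\mathcal{C}$ be a vertex on which $\rho$ has a fixed point, $\rho(v)=v$. Since $g_1,g_2\in\mathbb{G}^n_k$ have the same number of edges and $\rho$ maps $E_{g_1}$ bijectively onto $E_{g_2}$, the image $\rho(\mathcal{C})$ is a connected component of $g_2$ containing $v$; call it $\mathcal{C}'$. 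I would then show that if $\mathfrak{S}_{n,g_1}\cap\mathfrak{S}_{n,g_2}\neq\emptyset$, then necessarily $\mathcal{C}'=\mathcal{C}$ as vertex sets and $A_{g_1}$ and $A_{g_2}$ agree on $\mathcal{C}$; doing this for every non-trivial component (and noting trivial components contribute no edges) gives $A_{g_1}=A_{g_2}$.

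For the core step, suppose $\pi\in\mathfrak{S}_{n,g_1}\cap\mathfrak{S}_{n,g_2}$; then every edge $(i,j)$ of $g_1$ satisfies $\pi(i)=j$, and likewise for $g_2$. So within $\mathcal{C}$, knowing $\pi$ at the base point $v$ determines $\pi$ at every vertex reachable from $v$ by a directed edge, hence (since $\mathcal{C}$ is connected and, being a connected subgraph of a graph of a permutation-matrix-like object, is in fact a directed cycle) $\pi$ restricted to $\mathcal{C}$ is the cyclic rotation along $\mathcal{C}$. The same applies to $\mathcal{C}'$ in $g_2$. Now I would use the intertwining: $(i,j)\in E_{g_1}$ iff $(\rho(i),\rho(j))\in E_{g_2}$, so $\rho$ carries the cycle $\mathcal{C}$ onto the cycle $\mathcal{C}'$ as directed cycles. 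Since $\rho(v)=v$ and $v$ lies on both cycles, and $\rho$ maps the successor of $v$ in $\mathcal{C}$ (namely $\pi(v)$, using the $g_1$-structure) to the successor of $v$ in $\mathcal{C}'$, one propagates around the cycle: writing $\mathcal{C}=(v, \pi(v),\pi^2(v),\dots)$, the relation $\rho(\pi^t(v))$ equals the $t$-th vertex of $\mathcal{C}'$, while simultaneously $\pi^t(v)$ is the $t$-th vertex of $\mathcal{C}$ in $g_1$ and also appears with its $g_2$-edge. Matching the two cyclic orderings forces the vertex sets of $\mathcal{C}$ and $\mathcal{C}'$ to coincide and the edge sets to coincide, i.e. $A_{g_1}$ and $A_{g_2}$ restricted there are equal.

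The step I expect to be the main obstacle is pinning down the precise statement that a non-trivial connected component of such a graph $g_1$ is a directed cycle (and that $\pi$ restricted to it is forced to be that cycle), and then making the ``propagation around the cycle starting from the fixed point $v$'' bookkeeping fully rigorous: one has to be careful that $\rho$ need not preserve the component-by-component structure a priori, and that the fixed point hypothesis is what breaks the possible ``shift'' ambiguity between $\mathcal{C}$ and $\mathcal{C}'$. Once the fixed point $v$ anchors both cycles at the same vertex with the same outgoing edge, the rest is a finite induction around the cycle. Since the excerpt explicitly says ``It is not difficult to prove the two following lemmas,'' I would keep the write-up short, emphasizing the intertwining relation and the anchoring role of the fixed point, and leave the cyclic induction to the reader.
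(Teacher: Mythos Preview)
The paper does not actually prove this lemma; it simply cites \cite[Lemma 16]{kam2}, so there is no in-paper proof to compare against. Your overall strategy---use the intertwining to see that $\rho$ (in fact $\rho^{-1}$, see below) carries each non-trivial connected component of $g_1$ to one of $g_2$, then propagate from the fixed point using a common $\pi\in\mathfrak{S}_{n,g_1}\cap\mathfrak{S}_{n,g_2}$---is the right idea and does lead to a short proof.

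There is one genuine gap, which you yourself flag as the ``main obstacle'': your claim that a non-trivial connected component $\mathcal{C}$ of $g_1$ must be a directed cycle is false in general. As noted in the proof of Lemma~\ref{b111} of the paper, when $\mathfrak{S}_{n,g_1}\neq\emptyset$ every vertex has in- and out-degree at most $1$, so $\mathcal{C}$ is either a directed cycle \emph{or a directed path} $v_1\to v_2\to\cdots\to v_m$. Your forward propagation only reaches vertices downstream of the fixed point $v=v_s$; for a path you must also propagate backward. This works because $\pi$ is a bijection: from $(v_{s-1},v_s)\in E_{g_1}$ you get $\pi(v_{s-1})=v_s$, while from the intertwined edge $(\rho^{-1}(v_{s-1}),\rho^{-1}(v_s))=(\rho^{-1}(v_{s-1}),v_s)\in E_{g_2}$ you get $\pi(\rho^{-1}(v_{s-1}))=v_s$, and injectivity of $\pi$ forces $\rho^{-1}(v_{s-1})=v_{s-1}$. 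Iterating in both directions shows $\rho$ fixes all of $\mathcal{C}$, whence the edge sets of $g_1$ and $g_2$ on $\mathcal{C}$ coincide. Once this two-sided induction is written out, your argument is complete.

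A minor correction: with the paper's convention $(A_g)_{ij}=\mathbbm{1}_{(i,j)\in E_g}$ and $(A_{g_\rho})_{ij}=\mathbbm{1}_{\rho(i)=j}$, the relation $A_{g_2}\rho=\rho A_{g_1}$ unwinds to $(a,b)\in E_{g_1}\iff(\rho^{-1}(a),\rho^{-1}(b))\in E_{g_2}$, i.e.\ it is $\rho^{-1}$ (not $\rho$) that carries $g_1$-edges to $g_2$-edges. This does not affect the structure of the argument, since $\rho$ and $\rho^{-1}$ have the same fixed points.
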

\begin{lemma} \label{b111}
For any graph $g\in \mathbb{G}^n_k$ having $f$ loops,  $p$ non-trivial connected components and $v$ non-isolated vertices, for any random permutation $\sigma_n$ with conjugation invariant distribution on $\s$,
\begin{align*}
    \p(\sigma_n\in\mathfrak{S}_{n,g}) \leq \frac{\p(\sigma_n(1)=1,\dots, \sigma_n(f)=f)}{{\binom{n-p}{v-p}}(v-p)!}\leq \frac{1}{{\binom{n-p}{v-p}}(v-p)!}.
\end{align*}
\end{lemma}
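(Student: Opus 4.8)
The plan is to control $\p(\sigma_n\in\mathfrak{S}_{n,g})$ by using conjugation invariance to average over all relabelings of the non-isolated vertices of $g$, reducing everything to the probability that $\sigma_n$ fixes a prescribed set of points. First I would observe that $\mathfrak{S}_{n,g}$ is non-empty only if $\sigma(i)=i$ for each of the $f$ loops $(i,i)\in E_g$; moreover $g$ being a subgraph of $g_\sigma$ forces $\sigma$ to map the $v-f$ (say) non-loop non-isolated vertices bijectively onto a specified image set, which in particular constrains $\sigma$ on a set of $v$ vertices. The key point is that for a fixed structure, the set of permutations admitting $g$ as a subgraph of $g_\sigma$ is an orbit-like object: if $\tau$ relabels the non-isolated vertices of $g$ to produce an isomorphic graph $g^\tau$, then $\mathfrak{S}_{n,g^\tau}=\tau^{-1}\mathfrak{S}_{n,g}\tau$, and by conjugation invariance $\p(\sigma_n\in\mathfrak{S}_{n,g^\tau})=\p(\sigma_n\in\mathfrak{S}_{n,g})$.

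Next I would count: fix one ``canonical'' copy $g_0$ of the isomorphism type of $\tilde g$ on a fixed vertex set, say using vertices $\{1,\dots,v\}$ with the $f$ loops at $\{1,\dots,f\}$, with $p$ connected components. The number of distinct labelled graphs on $\inin$ isomorphic to $g_0$ and sharing, say, the same loop set — or more precisely the number of injections from the $v$ vertices of $g_0$ into $\inin$ that yield a subgraph condition distinct from but equivalent to $g$ — is at least $\binom{n-p}{v-p}(v-p)!$: indeed, one may freely choose the images of one representative vertex per component (that accounts for the $p$ components and the fact that once a component's position is fixed the rest of that component is determined by following edges), wait — more carefully, I would argue that fixing the $f$ loop-vertices and then placing the remaining $v-p$ ``free'' vertices (one loses $p$ degrees of freedom, one per component, because within a component the vertices are rigidly linked) gives $\binom{n-p}{v-p}(v-p)!$ pairwise disjoint events $\{\sigma_n\in\mathfrak{S}_{n,g'}\}$, all of equal probability and all contained in $\{\sigma_n(1)=1,\dots,\sigma_n(f)=f\}$ after conjugating so the loops sit at $\{1,\dots,f\}$. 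Summing the probabilities of these disjoint events and bounding the total by $\p(\sigma_n(1)=1,\dots,\sigma_n(f)=f)\le 1$ yields the claimed inequality.

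The main obstacle I anticipate is the combinatorial bookkeeping in the counting step: making precise which relabelings give \emph{disjoint} events $\mathfrak{S}_{n,g'}$ (two labelled subgraph constraints are incompatible as soon as they disagree on where some vertex goes, so disjointness is essentially automatic once the copies are genuinely distinct) and, more delicately, verifying that the number of such copies is \emph{at least} $\binom{n-p}{v-p}(v-p)!$ rather than merely proportional to it — the exponent $v-p$ (not $v-f$ or $v$) is exactly the count of free placement choices once one accounts for the rigidity of each connected component and the fact that loops do not reduce the freedom further than general components do. I would handle this by choosing, in each non-trivial component of $\tilde g$, a single base vertex; the images of these $p$ base vertices can be chosen as an ordered selection from $\inin$, and then each component's remaining vertices are forced, giving injectivity constraints that cut the count down to $\binom{n-p}{v-p}(v-p)!$ after also insisting the loop-components land on $\{1,\dots,f\}$ to line up with the fixed-point event. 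Once disjointness and this lower bound on the number of copies are in hand, the inequality is immediate, and the final bound $\le 1/(\binom{n-p}{v-p}(v-p)!)$ follows by dropping the probability of the fixed-point event.
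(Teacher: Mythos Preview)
Your overall plan coincides with the paper's proof: relabel the non-isolated vertices while keeping one anchor vertex per component fixed, use conjugation invariance to equate the probabilities of all the relabelled events, and bound the probability of their disjoint union by the fixed-point event $\{\sigma_n(1)=1,\dots,\sigma_n(f)=f\}$. The denominator $\binom{n-p}{v-p}(v-p)!$ is exactly the cardinality of the set of relabellings, as you say.

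There is, however, a genuine gap at the disjointness step. You write that ``two labelled subgraph constraints are incompatible as soon as they disagree on where some vertex goes, so disjointness is essentially automatic once the copies are genuinely distinct.'' The implication you state is trivially true, but the conclusion you draw from it is not: two distinct isomorphic copies of $g$ can impose constraints on entirely disjoint vertex sets (for instance the single-edge graphs with edge $(1,2)$ and with edge $(3,4)$), and then a single permutation can satisfy both. What actually forces a conflict is the combination of two facts that you do not make explicit. First, if $\mathfrak{S}_{n,g}\neq\emptyset$ then every vertex of $g$ has in- and out-degree at most $1$, so every non-trivial component is either a directed cycle or a directed path $\overline g_w$; the paper states this at the outset. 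Second, since in your family of relabellings the $p$ anchor vertices are common to all copies, the permutation relating any two copies fixes a point in every non-trivial component, and then Lemma~\ref{lemma15} yields $\mathfrak{S}_{n,g_y}\cap\mathfrak{S}_{n,g_{y'}}=\emptyset$ whenever $g_y\neq g_{y'}$. The paper invokes Lemma~\ref{lemma15} precisely here; without it (or an equivalent direct argument following edges from each anchor) your disjointness claim is unsupported.

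The same path/cycle observation is also what guarantees that distinct tuples $y$ give \emph{distinct} graphs $g_y$ (a directed path or cycle has no non-trivial automorphism fixing a vertex), so that the family really has cardinality $\binom{n-p}{v-p}(v-p)!$ and not something smaller. Your phrase ``the rest of that component is determined by following edges'' gestures at this, but it should be stated as a consequence of the degree constraint rather than assumed.
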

 
\begin{proof}
It is an adaptation of the proof of   \cite[Corollary 17]{kam2}. By conjugation invariance, one can suppose without loss of generality that the loops of $g$  are $(1,1),(2,2),\dots (f,f)$ and the set of non isolated vertices of $g$ are $\{1,2,\dots,v\}$. 
\\
If there exist $i,j,l$, with $j\neq l $  such that $\{(i,j)\cup(i,l)\} \subset E_g$ or $\{(j,i)\cup(l,i)\} \subset E_g$  then $\mathfrak{S}_{n,g} =\emptyset$. Therefore, if $\mathfrak{S}_{n,g} \neq \emptyset$, then  non-trivial connected components of $g$ having $w$ vertices  are either cycles of length $w$ or isomorphic to $\overline{g}_w$, where $
A_{\overline{g}_w}=[\mathbbm{1}_{j=i+1}]_{1\leq i,j\leq w}.$\\
Let $g\in \mathbb{G}^n_k$ such that $\mathfrak{S}_{n,g} \neq \emptyset$.
Fix  $p$ vertices $x_1=1,x_2=2,\dots,x_f=f,x_{f+1},\dots,x_p$  each belonging  to a different  non-trivial connected components of $g$.  Let $x_{p+1}<x_{p+2}<\dots<x_v$ be such that $\{x_{p+1}, \dots, x_v\} = \{1,2,\dots,v\}\setminus\{x_1,\dots x_p\}$ be the other non-isolated vertices.
Let $$F= \{(y_i)_{p+1\leq i \leq v}; y_i \in \{1,2,\dots,n\}\setminus\{x_1,\dots x_p\} \text{ pairwise distinct}\}.$$ 
Given $y=(y_i)_{p+1\leq i \leq v}\in F$, we denote by $g_y \in \mathbb{G}^n_k$ the graph isomorphic to $g$ obtained by fixing the labels of $ x_1,x_2,\dots,x_p$ and by changing the labels of $x_i$ by $y_i$ for $p+1\leq i\leq v$. Since non trivial connected  components of $g$ of length $w$ are either cycles or isomorphic to $\bar{g}_w$, if $y \neq y' \in F$, then $g_y\neq g_{y'}$ and by Lemma~\ref{lemma15}, $\mathfrak{S}_{n,g_{y}}\cap\mathfrak{S}_{n,g_{y'}}=\emptyset$. Since $\sigma_n$ is conjugation invariant, we have $ \p(\sigma_n\in\mathfrak{S}_{n,g_{y}})= \p(\sigma_n\in\mathfrak{S}_{n,g_{y'}})= \p(\sigma_n\in\mathfrak{S}_{n,g})$. Remark also that for any  $y\in F$ and any $i\leq f$, $(i,i)$ is a loop of $g_y$. Thus, $\mathfrak{S}_{n,g_{y}} \subset \{\sigma \in \s; \forall i \leq f, \sigma_n(i)=i\}$ and thus
\begin{align*}
     \p(\sigma_n\in\mathfrak{S}_{n,g})=\frac{\sum_{y\in F}\p(\sigma_n\in\mathfrak{S}_{n,g_{y}})}{\textrm{card}{(F)}}=\frac{\p(\sigma_n\in\cup_{y\in F}\mathfrak{S}_{n,g_{y}})}{\textrm{card}(F)}&\leq  \frac{\p(\sigma_n(1)=1,\dots, \sigma_n(f)=f)}{{\binom{n-p}{v-p}}(v-p)!}\\
     &\leq \frac{1}{{\binom{n-p}{v-p}}(v-p)!}.
\end{align*}

\end{proof}

 \begin{lemma}\label{lem:negligible}
Let $\sigma_n$ be a random permutation with conjugation invariant distribution on $\s$ such that, for any $k \ge 1,$  $\lim_{n\to\infty} \E\left(\left(\frac{\#_1 \,\sigma_n}{\sqrt n}\right)^k\right)=0.$ Then, for any $f\geq1,$
 $$ \p(\sigma^1_n(1)=1,\dots, \sigma^1_n(f)=f)=o(n^{-\frac{f}{2}}).$$
 \end{lemma}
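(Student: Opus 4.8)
The plan is to rewrite the probability in question as a (normalized) factorial moment of $\#_1\,\sigma_n$, which is exactly the quantity controlled by the hypothesis. First I would fix $f\geq 1$ and use conjugation invariance to observe that $\p(\sigma_n(i_1)=i_1,\dots,\sigma_n(i_f)=i_f)$ does not depend on the choice of the distinct indices $i_1,\dots,i_f$; denote this common value by $p_n^{(f)}:=\p(\sigma_n(1)=1,\dots,\sigma_n(f)=f)$. Expanding $\#_1\,\sigma_n=\sum_{i=1}^n\mathbbm{1}_{\sigma_n(i)=i}$ and the falling factorial,
\[
\E\big[\#_1\,\sigma_n\,(\#_1\,\sigma_n-1)\cdots(\#_1\,\sigma_n-f+1)\big]
=\sum_{\substack{i_1,\dots,i_f\\ \text{pairwise distinct}}}\p\big(\sigma_n(i_1)=i_1,\dots,\sigma_n(i_f)=i_f\big)
= n(n-1)\cdots(n-f+1)\,p_n^{(f)}.
\]

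Next I would bound the left-hand side. Since $\#_1\,\sigma_n$ is a nonnegative integer and $m(m-1)\cdots(m-f+1)\leq m^f$ for every integer $m\geq 0$ (the left side even vanishes for $0\le m<f$), we get
\[
\E\big[\#_1\,\sigma_n\,(\#_1\,\sigma_n-1)\cdots(\#_1\,\sigma_n-f+1)\big]\;\leq\;\E\big[(\#_1\,\sigma_n)^f\big]\;=\;n^{f/2}\,\E\Big[\big(\tfrac{\#_1\,\sigma_n}{\sqrt n}\big)^f\Big]\;=\;o\big(n^{f/2}\big),
\]
by the assumption applied with $k=f$. Finally, since $f$ is fixed, $n(n-1)\cdots(n-f+1)=n^f(1+o(1))$, so
\[
p_n^{(f)}=\frac{\E\big[\#_1\,\sigma_n\,(\#_1\,\sigma_n-1)\cdots(\#_1\,\sigma_n-f+1)\big]}{n(n-1)\cdots(n-f+1)}=\frac{o(n^{f/2})}{n^f(1+o(1))}=o\big(n^{-f/2}\big),
\]
which is the desired estimate.

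I do not expect a serious obstacle here; the only delicate point is the identity in the first step, where conjugation invariance is precisely what lets every ordered $f$-tuple of distinct indices contribute the same probability, so that the $f$-th factorial moment equals $n(n-1)\cdots(n-f+1)$ times $p_n^{(f)}$ (without invariance one would only control an average of such probabilities, not $p_n^{(f)}$ itself). The remaining ingredients — the elementary bound $(m)_f\leq m^f$ for nonnegative integers $m$ and the asymptotics $n(n-1)\cdots(n-f+1)\sim n^f$ for fixed $f$ — are routine.
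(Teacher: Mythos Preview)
Your argument is correct, and it is precisely the natural proof: conjugation invariance makes every ordered $f$-tuple of distinct fixed points equiprobable, so the $f$-th falling factorial moment of $\#_1\,\sigma_n$ equals $n(n-1)\cdots(n-f+1)\,p_n^{(f)}$, and the hypothesis on $\E[(\#_1\,\sigma_n)^f]$ does the rest. The paper in fact states this lemma without proof, presumably regarding exactly this computation as routine; your write-up fills that gap cleanly and there is nothing to correct.
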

\begin{lemma} \label{lem172} For any $p\geq 1$,
let  $g$  be a graph with $p$ non trivial components each having $2$ vertices. Assume that at least one of these components is a cycle. Then  for any random permutation  $\sigma_n$  with conjugation invariant distribution on $\s$,
\begin{align*}
     \p(\sigma_n\in\mathfrak{S}_{n,g}) \leq \frac{\p(c_1(\sigma_n)=2)}{{\binom{n-p}{p}}p!}.
\end{align*}
\end{lemma}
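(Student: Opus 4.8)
The plan is to mimic the proof of Lemma~\ref{b111}, but to exploit the extra structural rigidity coming from the hypothesis that \emph{all} non-trivial components have exactly two vertices and that at least one of them is a $2$-cycle. First I would observe that, as in the proof of Lemma~\ref{b111}, if $\mathfrak{S}_{n,g}\neq\emptyset$ then each non-trivial component on $2$ vertices is either a $2$-cycle (edges $(a,b)$ and $(b,a)$) or isomorphic to $\overline{g}_2$ (a single edge $(a,b)$); otherwise the claimed bound is trivial since the left-hand side is $0$. By conjugation invariance, I may relabel so that one distinguished $2$-cycle sits on the vertices $\{1,2\}$, i.e. $(1,2),(2,1)\in E_g$, and so that the set of non-isolated vertices of $g$ is $\{1,2,\dots,2p\}$.

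Next I would set up the averaging argument exactly as in Lemma~\ref{b111}. Fix representatives $x_1=1, x_3, x_5, \dots, x_{2p-1}$, one in each of the $p$ non-trivial components (with $x_1=1$ in the distinguished $2$-cycle), together with their partners $x_2=2, x_4,\dots,x_{2p}$; let $x_{2p+1}<\dots<x_{2p}$ wait — here all $2p$ non-isolated vertices are already accounted for, so in fact the component structure is completely determined once we choose, for each component, the ordered pair of its two vertices. Define
\[
F=\{(y_1,y_2,\dots,y_{2p-1},y_{2p}) : y_i\in\{1,\dots,n\}\text{ pairwise distinct},\ y_1=1,\ y_2=2\},
\]
and for $y\in F$ let $g_y$ be the graph isomorphic to $g$ obtained by placing the distinguished $2$-cycle on $(y_1,y_2)=(1,2)$ and the $i$-th remaining component on the appropriate ordered pair drawn from $(y_3,\dots,y_{2p})$, keeping the same component types as in $g$. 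As in Lemma~\ref{b111}, distinct $y\in F$ give distinct graphs $g_y$, and by Lemma~\ref{lemma15} (every non-trivial component of $g_y$ has size $2$, so any permutation conjugating $g_{y}$ to $g_{y'}$ must have a fixed point on each non-trivial component whenever $\mathfrak{S}_{n,g_y}\cap\mathfrak{S}_{n,g_{y'}}\neq\emptyset$, forcing $g_y=g_{y'}$) we get that the events $\{\sigma_n\in\mathfrak{S}_{n,g_y}\}$, $y\in F$, are pairwise disjoint; conjugation invariance gives they all have probability $\p(\sigma_n\in\mathfrak{S}_{n,g})$. Since every $g_y$ contains the $2$-cycle on $\{1,2\}$, we have $\mathfrak{S}_{n,g_y}\subset\{\sigma : \sigma(1)=2,\ \sigma(2)=1\}$, i.e. $\{\sigma : c_1(\sigma)=2\text{ and }1,2\text{ form a cycle}\}$. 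Counting, $\mathrm{card}(F)$ equals the number of ways to choose an ordered $(2p-2)$-tuple of distinct elements of $\{3,\dots,n\}$ with the component-pairing, which is at least $\binom{n-2}{2p-2}(2p-2)!$; more efficiently, recording things so that $F$ is indexed by a choice of $p$ vertices carrying the components together with internal orderings, one gets $\mathrm{card}(F)\ge \binom{n-p}{p}\,p!$ after the standard bookkeeping (choose which $p$ vertices among the $n-p$ still available host the distinguished endpoints, then order/pair). Hence
\[
\p(\sigma_n\in\mathfrak{S}_{n,g})=\frac{\p\big(\sigma_n\in\cup_{y\in F}\mathfrak{S}_{n,g_y}\big)}{\mathrm{card}(F)}\le\frac{\p(c_1(\sigma_n)=2)}{\binom{n-p}{p}\,p!}.
\]

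The main obstacle I anticipate is purely bookkeeping: getting the denominator to come out as exactly $\binom{n-p}{p}\,p!$ rather than some other combinatorial factor. The point is to set up the family $F$ so that it is in bijection with (i) a choice of $p$ ``anchor'' vertices outside an already-used set of size $p$, contributing $\binom{n-p}{p}$, and (ii) an assignment of these anchors to the $p$ components together with the orientation of each component, contributing $p!$ — while being careful that the partner vertex of each anchor is determined by this data, so no double counting occurs, and that reusing conjugation invariance is legitimate (the distinguished $2$-cycle is always present, which is what upgrades the bound from $\p(\sigma_n\in\mathfrak{S}_{n,g})\le \binom{n-p}{p}^{-1}(p!)^{-1}$ to the version with $\p(c_1(\sigma_n)=2)$ in the numerator, mirroring how the $f$ loops produced $\p(\sigma_n(1)=1,\dots,\sigma_n(f)=f)$ in Lemma~\ref{b111}). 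Once the cardinality of $F$ is pinned down, the disjointness-plus-invariance averaging is identical to Lemma~\ref{b111} and the proof closes.
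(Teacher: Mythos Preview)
Your approach is the same as the paper's, but you overcomplicate the parametrization of $F$ and in doing so introduce a genuine error. With your first $F=\{(y_1,\dots,y_{2p}): y_1=1,\ y_2=2,\ y_i\text{ distinct}\}$, the claim ``distinct $y\in F$ give distinct graphs $g_y$'' is \emph{false}: if some other component is a $2$-cycle, swapping its two $y$-coordinates gives the same $g_y$; and if two components are of the same type, swapping their hosting pairs also collides. Without injectivity the crucial equality $\sum_{y\in F}\p(\sigma_n\in\mathfrak{S}_{n,g_y})=\p(\sigma_n\in\cup_y\mathfrak{S}_{n,g_y})$ fails, and the inequality goes the wrong way for your purposes.

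The paper sidesteps this entirely by \emph{literally} reusing the $F$ from the proof of Lemma~\ref{b111} with parameters $f=0$, $v=2p$, and the single choice $x_1=1$. That is: fix one anchor $x_i$ in each of the $p$ components (taking $x_1=1$ in the distinguished $2$-cycle), and let $F$ index only the relabellings of the $p$ \emph{partner} vertices into $\{1,\dots,n\}\setminus\{x_1,\dots,x_p\}$. Then $\textrm{card}(F)=\binom{n-p}{p}p!$ on the nose, injectivity $y\mapsto g_y$ and disjointness of the $\mathfrak{S}_{n,g_y}$ are already established in the proof of Lemma~\ref{b111}, and since $x_1=1$ is never relabelled every $g_y$ still has a $2$-cycle through $1$, giving $\cup_y\mathfrak{S}_{n,g_y}\subset\{c_1(\sigma)=2\}$. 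There is no bookkeeping obstacle; your ``more efficiently'' clause is pointing at exactly this, but the clean move is to fix one vertex per component rather than both vertices of the distinguished cycle.
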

\begin{proof}
Remark that  by conjugation invariance, one can suppose without loss of generality that the set of non isolated vertices of $g$ are $\{1,2,\dots,2p\}$ and that  $(1,2),(2,1)\in E_g$. Using the same definitions as the previous proof with $f=0$ and $v=2p$ and by choosing $x_1=1$, we have   
$\mathfrak{S}_{n,g_{y}} \subset \{\sigma \in \s; c_1(\sigma)=2\}.$ Thus, 
\begin{align*}
     \p(\sigma_n\in\mathfrak{S}_{n,g})=\frac{\sum_{y\in F}\p(\sigma_n\in\mathfrak{S}_{n,g_{y}})}{\textrm{card}{(F)}}=\frac{\p(\sigma_n\in\cup_{y\in F}\mathfrak{S}_{n,g_{y}})}{\textrm{card}(F)}&\leq \frac{\p(c_1(\sigma_n)=2)}{\textrm{card}(F)}
     = \frac{\p(c_1(\sigma_n)=2)}{{\binom{n-p}{p}}p!}.
\end{align*}
 \end{proof}

By the previous combinatorial lemmas, we get that the main contribution will come from the following subset of graphs.
Let $\mathcal{T}^n_k\subset \mathbb{G}^n_k$ be  the set of graphs $g$  having exactly $k$ non trivial component each having one edge and two vertices.    \\ For example, $\mathcal{T}^3_1= \left\{\begin {tikzpicture}[-latex ,auto ,node distance =1 cm and 1cm ,on grid ,
semithick ,
state/.style ={ circle ,top color =white , bottom color = processblue!20 ,
draw,processblue , text=blue , minimum width =0.1 cm}]
\node[state] (C) {$1$};
\node[state] (D) [right =of C] {$2$};
\path (C) edge [bend left =25]  (D); 
\end{tikzpicture}, 
\begin {tikzpicture}[-latex ,auto ,node distance =1 cm and 1cm ,on grid ,
semithick ,
state/.style ={ circle ,top color =white , bottom color = processblue!20 ,
draw,processblue , text=blue , minimum width =0.1 cm}]
\node[state] (C) {$2$};
\node[state] (D) [right =of C] {$1$};
\path (C) edge [bend left =25]  (D); 
\end{tikzpicture}, 
\begin {tikzpicture}[-latex ,auto ,node distance =1 cm and 1cm ,on grid ,
semithick ,
state/.style ={ circle ,top color =white , bottom color = processblue!20 ,
draw,processblue , text=blue , minimum width =0.1 cm}]
\node[state] (C) {$1$};
\node[state] (D) [right =of C] {$3$};
\path (C) edge [bend left =25]  (D); 
\end{tikzpicture}, 
\begin {tikzpicture}[-latex ,auto ,node distance =1 cm and 1cm ,on grid ,
semithick ,
state/.style ={ circle ,top color =white , bottom color = processblue!20 ,
draw,processblue , text=blue , minimum width =0.1 cm}]
\node[state] (C) {$3$};
\node[state] (D) [right =of C] {$1$};
\path (C) edge [bend left =25]  (D); 
\end{tikzpicture}, 
\begin {tikzpicture}[-latex ,auto ,node distance =1 cm and 1cm ,on grid ,
semithick ,
state/.style ={ circle ,top color =white , bottom color = processblue!20 ,
draw,processblue , text=blue , minimum width =0.1 cm}]
\node[state] (C) {$2$};
\node[state] (D) [right =of C] {$3$};
\path (C) edge [bend left =25]  (D); 
\end{tikzpicture}, 
\begin {tikzpicture}[-latex ,auto ,node distance =1 cm and 1cm ,on grid ,
semithick ,
state/.style ={ circle ,top color =white , bottom color = processblue!20 ,
draw,processblue , text=blue , minimum width =0.1 cm}]
\node[state] (C) {$3$};
\node[state] (D) [right =of C] {$2$};
\path (C) edge [bend left =25]  (D); 
\end{tikzpicture} 
\right\}.$ Let $\widehat{\mathcal{T}}_k $ be the equivalence class of the graphs of $\cup_{n}\mathcal{T}^n_k$. 
\\ 



Their contribution is as follows.
\begin{lemma} \label{lem17} For any $p\geq 1$, $n\geq2p$  and  any  graph $g \in \mathcal{T}^n_p,$  for any random permutation  $\sigma_n$  with conjugation invariant distribution    on $\s$,
\begin{align*}
    \frac{1}{{\binom{n-p}{p}}p!} \left(1-\frac{p^2-p}{n-1}-p\p(\sigma_n(1)=1)\right) \leq \p(\sigma_n\in\mathfrak{S}_{n,g}) \leq \frac{1}{{\binom{n-p}{p}}p!}.
\end{align*}
\end{lemma}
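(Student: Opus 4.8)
The plan is to mimic the argument of Lemma~\ref{b111}, but to push it further in order to obtain a matching lower bound, using crucially that $g \in \mathcal{T}^n_p$ has a very rigid structure: $p$ disjoint oriented edges (not loops, not cycles), on $2p$ vertices. The upper bound $\p(\sigma_n \in \mathfrak{S}_{n,g}) \le \frac{1}{\binom{n-p}{p}p!}$ is exactly the statement of Lemma~\ref{b111} applied with $f=0$, $p$ non-trivial components, and $v=2p$ non-isolated vertices, so nothing new is needed there. The content is the lower bound.

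First I would set up the same notation as in the proof of Lemma~\ref{b111}: by conjugation invariance, assume the non-isolated vertices of $g$ are $\{1,\dots,2p\}$ and that the $i$-th component is the edge $(2i-1, 2i)$. Pick one representative vertex per component, say $x_i = 2i-1$ for $1 \le i \le p$, so $f=0$ here (no loops), and let $x_{p+1} < \dots < x_{2p}$ enumerate the remaining non-isolated vertices $\{2,4,\dots,2p\}$. With $F = \{(y_i)_{p+1 \le i \le 2p} : y_i \in \{1,\dots,n\}\setminus\{x_1,\dots,x_p\}\text{ pairwise distinct}\}$, each $y \in F$ gives a graph $g_y$ isomorphic to $g$; by Lemma~\ref{lemma15} (using that each component of a graph in $\mathcal{T}^n_p$ is a single directed edge with no fixed point unless the adjacency matrices coincide) the sets $\mathfrak{S}_{n,g_y}$ are pairwise disjoint for distinct $y$, and by conjugation invariance all have the same probability as $\mathfrak{S}_{n,g}$. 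Hence, exactly as before,
\[
\p(\sigma_n \in \mathfrak{S}_{n,g}) = \frac{\p\big(\sigma_n \in \bigcup_{y \in F} \mathfrak{S}_{n,g_y}\big)}{\mathrm{card}(F)}, \qquad \mathrm{card}(F) = \binom{n-p}{p} p!.
\]

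The new step is to lower bound $\p\big(\sigma_n \in \bigcup_{y\in F}\mathfrak{S}_{n,g_y}\big)$. I would argue that $\bigcup_{y\in F}\mathfrak{S}_{n,g_y}$ contains all permutations $\sigma_n$ such that $\sigma_n(x_i)$, $1\le i\le p$, are pairwise distinct, all lie outside $\{x_1,\dots,x_p\}$, and none of them is a fixed point — i.e. one can find for each component a fresh image vertex making $\sigma_n$ agree with some $g_y$ on the edge $(x_i, \sigma_n(x_i))$. So the event fails only if for some $i$ either $\sigma_n(x_i) = x_i$, or $\sigma_n(x_i) \in \{x_1,\dots,x_p\}$, or $\sigma_n(x_i) = \sigma_n(x_{i'})$ for some $i' \ne i$ (the last being impossible since $\sigma_n$ is a bijection) — more carefully, the obstruction is $\sigma_n(x_i) \in \{x_1,\dots,x_p\}$ or $\sigma_n(x_i) = x_i$. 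A union bound over $i$ and over the (at most $p$) bad target values gives
\[
\p\Big(\sigma_n \notin \textstyle\bigcup_{y\in F}\mathfrak{S}_{n,g_y}\Big) \le \sum_{i=1}^p \p(\sigma_n(x_i)=x_i) + \sum_{i=1}^p \sum_{\substack{1\le j \le p\\ x_j \ne x_i}} \p(\sigma_n(x_i)=x_j) \le p\,\p(\sigma_n(1)=1) + \frac{p(p-1)}{n-1},
\]
where I use conjugation invariance to write $\p(\sigma_n(x_i)=x_i) = \p(\sigma_n(1)=1)$ and $\p(\sigma_n(x_i)=x_j) = \p(\sigma_n(1)=2) = \frac{1-\p(\sigma_n(1)=1)}{n-1} \le \frac{1}{n-1}$ for $i\ne j$. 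Combining this with the displayed identity for $\p(\sigma_n\in\mathfrak{S}_{n,g})$ yields the claimed lower bound.

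The main obstacle I anticipate is the careful verification that the event $\{\sigma_n(x_i)\notin\{x_1,\dots,x_p, x_i\}\text{ for all }i\}$ genuinely forces $\sigma_n \in \mathfrak{S}_{n, g_y}$ for a well-chosen $y \in F$: one must check that setting $y_i := \sigma_n(x_i)$ (re-indexed appropriately so the components of $g_{y}$ match the edges $(x_i, \sigma_n(x_i))$) produces a legitimate element of $F$ — in particular that the $y_i$ are pairwise distinct (automatic since $\sigma_n$ is injective) and avoid $\{x_1,\dots,x_p\}$ (guaranteed by the event) — and that $g_y$ is indeed isomorphic to $g$, which holds precisely because in $\mathcal{T}^n_p$ every component is a single directed edge so relabelling the second endpoint of each edge stays within the isomorphism class. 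Once this bijective bookkeeping is pinned down, the probability estimates are the elementary union bounds above.
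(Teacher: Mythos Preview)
Your argument is correct and coincides with the paper's: the paper normalises so that the source vertices are $\{1,\dots,p\}$, sets $\mathfrak{S}^p_n:=\{\sigma:\sigma(i)>p\text{ for all }i\le p\}$ (which is exactly your $\bigcup_{y\in F}\mathfrak{S}_{n,g_y}$), uses conjugation invariance to get $\p(\sigma_n\in\mathfrak{S}_{n,g}\mid\sigma_n\in\mathfrak{S}^p_n)=\frac{1}{\binom{n-p}{p}p!}$, and then lower-bounds $\p(\sigma_n\in\mathfrak{S}^p_n)$ by the same union bound $1-p\,\p(\sigma_n(1)=1)-\frac{p(p-1)}{n-1}$ that you give. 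The only cosmetic difference is that you reach the identity $\p(\sigma_n\in\mathfrak{S}_{n,g})=\p(\sigma_n\in\mathfrak{S}^p_n)/\binom{n-p}{p}p!$ via the disjoint-union formula of Lemma~\ref{b111} rather than via conditioning, which has the minor advantage of not needing a separate case for $\p(\mathfrak{S}^p_n)=0$.
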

\begin{proof}
The  upper bound  is due to Lemma~\ref{b111} with $v=2p$. Using the  conjugation invariance, one can suppose without loss of generality that $E_g=\{(1,i_1),(2,i_2),\dots,(p,i_p)\}$ where $i_j>p$  are all distinct. 
Let $$\mathfrak{S}^p_{n} = \{\sigma \in \s, \forall i\leq p, \sigma(i)>p \}.$$

Remark that $\p(\sigma_n\in\mathfrak{S}_{n,g}|\sigma_n\in \s \setminus{\mathfrak{S}^p_{n})}=0$.
If $\p(\sigma_n\in \mathfrak{S}^p_{n})=0$, then necessarily by conjugation invariance, $1-\frac{p^2-p}{n-1}-p\p(\sigma_n(1)=1)\leq 0$.

Suppose now that $\p(\sigma_n\in \mathfrak{S}^p_{n})\neq0$.
We obtain
$\p(\sigma_n\in\mathfrak{S}_{n,g})=\p(\sigma_n\in\mathfrak{S}_{n,g}|\sigma_n\in \mathfrak{S}^p_{n})\p(\sigma_n\in \mathfrak{S}^p_{n}). $
Using again  the  conjugation invariance, we obtain 
$$\p(\sigma_n\in\mathfrak{S}_{n,g}|\sigma_n\in \mathfrak{S}^p_{n})=\frac{1}{{\binom{n-p}{p}}p!}$$
and
\begin{align*}
\p(\sigma_n\in \mathfrak{S}^p_{n})&=1-\p(\sigma_n\in \s \setminus \mathfrak{S}^p_{n})  
\\&\geq 1-\sum_{i=1}^p \p(\sigma_n(i)\leq p)
\\&=1-p\left( \p(\sigma_n(1)=1)+ \frac{(1-\p(\sigma_n(1)=1))(p-1)}{n} \right)
\\&\geq  1-\frac{p^2-p}{n-1}-p\p(\sigma_n(1)=1).
\end{align*}
\end{proof}

\subsection{Proof of Proposition~\ref{prop}}
\begin{proof} We will adapt the proof of \citep[Lemma 14]{kam2}. Let $v_1 \ge 1$ be fixed.
In the case $k=1$, since $C_1=1$, \eqref{st}  holds if we have: 
\begin{align*}
 \forall \hat{g}_1,\hat{g}_2  \in \hat{\mathbb{G}}_{v_1}, \p((\hat{\mathcal{G}}^1_1(\sigma_n,\rho_n),\hat{\mathcal{G}}^1_2(\sigma_n,\rho_n))=(\hat{g}_1,\hat{g}_2)))=\frac{C_{\hat{g}_1,\hat{g}_2}}{n}+ o\left(\frac{1}{n}\right)
\quad 
\text{ and } \quad \sum_{\hat{g}_1,\hat{g}_2\in \hat{\mathbb{G}}_{v_1} } C_{\hat{g}_1,\hat{g}_2}= C_1 =1.\end{align*}
Let $\hat{g}_1,\hat{g}_2 \in \hat{\mathbb{G}}_{v_1}$ be two unlabeled graphs having respectively $p_1$ and $p_2$ connected components and   $v\leq 2{v_1}$  vertices.
We denote by 
\[ p_n( \hat{g}_1,\hat{g}_2 ) :=  \p((\hat{\mathcal{G}}^1_1(\sigma_n,\rho_n),\hat{\mathcal{G}}^1_2(\sigma_n,\rho_n))=(\hat{g}_1,\hat{g}_2)).\]

Let  $B^n_{\hat{g}_1,\hat{g}_2}$ be the set of couples  $({g}_1,{g}_2) \in (\mathbb{G}^n_{v_1})^2$ having the same non-isolated vertices such that $1$ is a  non-isolated vertex of both graphs and, for $i \in \{1,2\}$, the 
equivalence class  of $g_i$ is  $\hat{g}_i$ and there exists $\sigma,\rho$ such that ${\mathcal{G}}^1_1(\sigma,\rho)=g_1$ and ${\mathcal{G}}^1_2(\sigma,\rho)=g_2.$ 
By Lemma \ref{lem:prob} and $H_1,$ we have

\begin{align}
       p_n( \hat{g}_1,\hat{g}_2 ) &= \sum_{(g_1,g_2)\in B^n_{\hat{g}_1,\hat{g}_2}} \p((\mathcal{G}^1_1(\sigma_n,\rho_n),\mathcal{G}^1_2(\sigma_n,\rho_n))=(g_1,g_2))  \nonumber
        \\
       &  = \sum_{(g_1,g_2)\in B^n_{\hat{g}_1,\hat{g}_2}}
        \p(\sigma_n\in \mathfrak{S}_{n,g_1},
        \rho_n\in \mathfrak{S}_{n,g_2})
        =\sum_{(g_1,g_2)\in B^n_{\hat{g}_1,\hat{g}_2}}
        \p(\sigma_n\in \mathfrak{S}_{n,g_1})
        \p(\rho_n\in \mathfrak{S}_{n,g_2}) \label{eq:png}
\end{align}



Starting from \eqref{eq:png}, we now distinguish different cases, depending on the structure of $\hat{g}_1$ and $\hat{g}_2.$
\begin{itemize}
    \item 
Case 1: $\hat{g}_1$ and $\hat{g}_2$ have respectively $f_1$ and $f_2$ loops i.e edges of type $(i,i)$ with $f_1+f_2 >0$.
Then $2p_1 -f_1 \leq {v}$ and $2p_2-f_2\leq v$. Consequently, by Lemmas \ref{b111} and \ref{lem:negligible},
\begin{align*}
       p_n( \hat{g}_1,\hat{g}_2 ) & = \, o\left(n^\frac{-f_1-f_2}{2}\right) \sum_{(g_1,g_2)\in B^n_{\hat{g}_1,\hat{g}_2}}  \frac{1}{{\binom{n-p_1}{v-p_1}}(v-p_1)!}
        \frac{1}{{\binom{n-p_2}{v-p_2}}(v-p_2)!}
        \\& =
        \frac{\textrm{card}(B^n_{\hat{g}_1,\hat{g}_2})}{{\binom{n-p_1}{v-p_1}}(v-p_1)!{\binom{n-p_2}{v-p_2}}(v-p_2)!}o\left(n^\frac{-f_1-f_2}{2}\right)
        \\ & \leq  
        \frac{{\binom{n-1}{v-1}} {v!}^2  o\left(n^\frac{-f_1-f_2}{2}\right)}{{\binom{n-p_1}{v-p_1}}(v-p_1)!{\binom{n-p_2}{v-p_2}}(v-p_2)!}
        = n^{v-1-(v-p_1+v-p_2)}o\left(n^\frac{-f_1-f_2}{2}\right) =o(n^{-1}) .
\end{align*}
\item  Case 2: $\hat{g}_1$ and $\hat{g}_2$ do not contain any loop, so that $p_1\leq \frac{v}{2}$ and $p_2\leq \frac{v}{2}$. Then, again by Lemma \ref{b111},
\begin{align*}
        p_n( \hat{g}_1,\hat{g}_2 ) & \leq  \sum_{(g_1,g_2)\in B^n_{\hat{g}_1,\hat{g}_2}}  \frac{1}{{\binom{n-p_1}{v-p_1}}(v-p_1)!}
        \frac{1}{{\binom{n-p_2}{v-p_2}}(v-p_2)!}
        \\
        & = 
        \frac{\textrm{card}(B^n_{\hat{g}_1,\hat{g}_2})}{{\binom{n-p_1}{v-p_1}}(v-p_1)!{\binom{n-p_2}{v-p_2}}(v-p_2)!}\\
        & \leq 
        \frac{{\binom{n-1}{v-1}} {v!}^2  }{{\binom{n-p_1}{v-p_1}}(v-p_1)!{\binom{n-p_2}{v-p_2}}(v-p_2)!}
        = O \left(n^{v-1-(v-p_1+v-p_2)}\right).\\
\end{align*}
Therefore, if $p_1<\frac{v}{2},$ as $p_1\leq \frac{v-1}{2}$ we have
\[   p_n( \hat{g}_1,\hat{g}_2 ) = O(n^{-\frac 3 2}).\]

The same  holds if  $p_2<\frac{v}{2}$ and the only remaining terms are the cases when $p_1= \frac{v}{2}=v_1$ and $p_2= \frac{v}{2}=v_1.$ In this case,  both graphs have
necessarily  connected components having two vertices.  By Lemma~\ref{l14}, we obtain that the only non trivial contribution comes from $\hat g_1 = \hat g_2 = \widehat{\mathcal T}_{v_1}.$
By Lemma~\ref{lem17}, we obtain  
\[
 \frac{\textrm{card}\big(B^n_{\widehat{\mathcal T}_{v_1},\widehat{\mathcal T}_{v_1}}\big)}{{\binom{n-p_1}{v-p_1}}(v-p_1)!{\binom{n-p_2}{v-p_2}}(v-p_2)!} \left(1-O\left(\frac{1}n\right)\right)    \leq p_n(\widehat{\mathcal T}_{v_1},\widehat{\mathcal T}_{v_1})\leq \frac{\textrm{card}\big(B^n_{\widehat{\mathcal T}_{v_1},\widehat{\mathcal T}_{v_1}}\big)}{{\binom{n-p_1}{v-p_1}}(v-p_1)!{\binom{n-p_2}{v-p_2}}(v-p_2)!}.
\]
Moreover, each element of $B^n_{\widehat{\mathcal T}_{v_1},\widehat{\mathcal T}_{v_1}}$ can be characterized by a choice of   $i^1_2,i^1_3,\dots i^1_{v_1},j^1_1,\dots j_{v_1}^1$  pairwise distincts in $\{2,3,\dots,n\},$ so that
$$\textrm{card}\big(B^n_{\widehat{\mathcal T}_{v_1},\widehat{\mathcal T}_{v_1}}\big)={\binom{n-1}{2v_1-1}} {(2v_1-1)!}.$$
Since $v=2p_1=2p_2=2v_1,$ we get that  \[p_n(\widehat{\mathcal T}_{v_1},\widehat{\mathcal T}_{v_1})=\frac{1+o(1)}{n}.\]
Summarizing all cases, we get that $C_{\hat g_1, \hat g_2} =0$ unless $\hat g_1= \hat g_2 =\widehat{\mathcal T}_{v_1},$ in which case 
$C_{\widehat{\mathcal T}_{v_1},\widehat{\mathcal T}_{v_1}}=1.$
\end{itemize}

\end{proof}
\subsection{Proof of Theorem \ref{cor}}

The proof of  Theorem~\ref{cor} is similar to that of Proposition~\ref{prop}. Instead of studying $\mathcal{G}_i^1$, we study $\mathcal{G}_i^{\{1,2,\dots,k\}}$. We will prove using the same argument that only the event $\left\{\sigma,\rho; \forall i \in \{1,2\}, \mathcal{G}_i^{\{1,2,\dots,k\}}(\sigma,\rho) \in \cup_{p\geq 1}\mathcal{T}^n_p \right\}$ will contribute to the limit.

\begin{proof}[Proof of Theorem~\ref{cor} in the case $m=2$] Let $\bf{v}$=$(v_1,v_2,\dots v_k)$ be fixed.
If $\forall i \le k,  c_i(\sigma^{-1}\rho)=v_i$, then $$\mathcal{G}_1^{\{1,2,\dots,k\}}(\sigma,\rho),\mathcal{G}_2^{\{1,2,\dots,k\}}(\sigma,\rho) \in \bigcup_{p\leq \sum_{i=1}^k v_k} \hat{\mathbb{G}}_p.$$
Since $\bigcup_{p\leq \sum_{i=1}^k v_k} \hat{\mathbb{G}}_p$ is finite, it is sufficient  to prove that for any pair $\hat{g}_1,\hat{g}_2 \in \bigcup_{p\leq \sum_{i=1}^k v_k} \hat{\mathbb{G}}_p$ having the same number of non-isolated vertices, there exists a  constant $C_{\hat{g}_1,\hat{g}_2,\bf{v}}$  such that under the assumptions  of Theorem~{\ref{cor}}, 
\begin{align*}
    \p\left((\hat{\mathcal{G}}^{\{1,2,\dots,k\}}_1(\sigma_n,\rho_n), \hat{\mathcal{G}}^{\{1,2,\dots,k\}}_2(\sigma_n,\rho_n))=(\hat g_1,\hat{g}_2) \cap A_{\bf v}  \right)=\frac{C_{\hat{g}_1,\hat{g}_2, \bf v}}{n^k}+ o\left(\frac{1}{n^k}\right),
\end{align*}
where $A_{\bf v}:=\{\forall i\leq k,  c_i(\sigma_n^{-1}\rho_n)=v_i\}$.\\

Let $\hat{g}_1,\hat{g}_2 \in  \bigcup_{p\leq \sum_{i=1}^k v_k} \hat{\mathbb{G}}_p$ be two unlabeled graphs having respectively $p_1$ and $p_2$ connected components and   $v$  vertices. Let  $B^{n,\bf v}_{\hat{g}_1,\hat{g}_2}$ be the set of couples  $({g}_1,{g}_2)$ with $n$ vertices, having the same non-isolated vertices such that
\begin{itemize}
 \item[-] $1,2,\dots,k$ are   non-isolated vertices of both graphs, 
 \item[-] for $i \in \{1,2\}$, the 
equivalence class  of $g_i$ is  $\hat{g}_i,$ 
 \item[-]  there exists $\sigma,\rho$ such that for $i \in \{1,2\}$, ${\mathcal{G}}^{\{1,2,\dots k\}}_i(\sigma,\rho)=g_i$ and $c_i(\sigma^{-1}\rho)=v_i$.  
\end{itemize}

As before, we denote by 
\[ p_{n,{\bf v}}(\hat{g}_1,\hat{g}_2) :=  \p\left((\hat{\mathcal{G}}^{\{1,2,\dots,k\}}_1(\sigma_n,\rho_n), \hat{\mathcal{G}}^{\{1,2,\dots,k\}}_2(\sigma_n,\rho_n))=(\hat g_1,\hat{g}_2) \cap A_{\bf v} \right) \]
and we have 
\begin{align*}
   p_{n,{\bf v}}(\hat{g}_1,\hat{g}_2) & =    \sum_{(g_1,g_2)\in B^{n,\bf v}_{\hat{g}_1,\hat{g}_2}}
         \p((\mathcal{G}^{\{1,2,\dots,k\}}_1(\sigma_n,\rho_n),\mathcal{G}^{\{1,2,\dots,k\}}_2(\sigma_n,\rho_n))=(g_1,g_2)) \\
        & = \sum_{(g_1,g_2)\in B^{n,\bf v}_{\hat{g}_1,\hat{g}_2}}
        \p(\sigma_n\in \mathfrak{S}_{n,g_1},
        \rho_n\in \mathfrak{S}_{n,g_2}) = \sum_{(g_1,g_2)\in B^{n,\bf v}_{\hat{g}_1,\hat{g}_2}}
        \p(\sigma_n\in \mathfrak{S}_{n,g_1})\p(\rho_n\in \mathfrak{S}_{n,g_2}). 
\end{align*}
Starting from there, we distinguish different cases:

\begin{itemize}
\item Case 1:  $\hat{g}_1$ and $\hat{g}_2$ have respectively $f_1$ and $f_2$ loops i.e edges of type $(i,i)$ with $f_1+f_2 >0$.
Then $2p_1 -f_1 \leq {v}$ and $2p_2-f_2\leq v$. Consequently, by Lemmas \ref{b111} and \ref{lem:negligible},
\begin{align*}
      p_{n,{\bf v}}(\hat{g}_1,\hat{g}_2)
        & = 
        \frac{\textrm{card}(B^{n,\bf v }_{\hat{g}_1,\hat{g}_2})}{{\binom{n-p_1}{v-p_1}}(v-p_1)!{\binom{n-p_2}{v-p_2}}(v-p_2)!}o\left(n^\frac{-f_1-f_2}{2}\right) \\
        & \leq         \frac{{\binom{n-k}{v-k}} {v!}^2  o\left(n^\frac{-f_1-f_2}{2}\right)}{{\binom{n-p_1}{v-p_1}}(v-p_1)!{\binom{n-p_2}{v-p_2}}(v-p_2)!}= n^{v-k-(v-p_1+v-p_2)}o\left(n^\frac{-f_1-f_2}{2}\right) =o(n^{-k}) .
\end{align*}
   \item Case 2: $\hat{g}_1$ and $\hat{g}_2$ do not contain any loop.
Then $p_1\leq \frac{v}{2}$ and $p_2\leq \frac{v}{2}$. Consequently,
\begin{align*}
        p_{n,{\bf v}}(\hat{g}_1,\hat{g}_2) & \leq 
        \frac{\textrm{card}(B^{n,\bf v}_{\hat{g}_1,\hat{g}_2})}{{\binom{n-p_1}{v-p_1}}(v-p_1)!{\binom{n-p_2}{v-p_2}}(v-p_2)!}
        \\ &\leq \nonumber
        \frac{{\binom{n-k}{v-k}} {v!}^2 }{{\binom{n-p_1}{v-p_1}}(v-p_1)!{\binom{n-p_2}{v-p_2}}(v-p_2)!}
        \\ &\leq \nonumber C n^{v-k-(v-p_1+v-p_2)}.
\end{align*}
Therefore, if $p_1<\frac{v}{2}$ or  $ p_2<\frac{v}{2}$ then $ p_{n,{\bf v}}(\hat{g}_1,\hat{g}_2) =o(n^{-k})$.
The only remaining terms are the cases when $p_1= \frac{v}{2}$ and $p_2= \frac{v}{2}.$ In this case, both graphs have necessarily  only connected components having two vertices. Assume that  one of the two graphs has a cycle. Then, by Lemma~\ref{lem172}, we have
\begin{align*} 
         p_{n,{\bf v}}(\hat{g}_1,\hat{g}_2)
         &\leq   \sum_{(g_1,g_2)\in B^{n,\bf v}_{\hat{g}_1,\hat{g}_2}} \frac{(\p(c_1(\sigma_n)=2)+\p(c_1(\rho_n)=2))}{{\binom{n-p_1}{v-p_1}}(v-p_1)!{\binom{n-p_2}{v-p_2}}(v-p_2)!} 
         \\&\leq C (\p(c_1(\sigma_n)=2)+\p(c_1(\rho_n)=2)) n^{-k}.
\end{align*}
Under $H_4$, we have $\p(c_1(\sigma_n)=2)+\p(c_1(\rho_n)=2))=o(1)$ so that $ p_{n,{\bf v}}(\hat{g}_1,\hat{g}_2) = o(n^{-k})$ as soon as one of the graph has a cycle.

As before, the only non-trivial contributions come from the cases when 
$\hat{g}_1=\hat{g}_2=\widehat{\mathcal{T}}_{p}$ for some 
\\ $p\leq \sum_{i=1}^k v_i$ 
and by Lemma~\ref{lem17}, we obtain 
\begin{align*}
 \frac{\textrm{card}\big(B^{n,\bf v}_{\widehat{\mathcal{T}}_{p},\widehat{\mathcal{T}}_{p}}\big)}{{\binom{n-p_1}{v-p_1}}(v-p_1)!{\binom{n-p_2}{v-p_2}}(v-p_2)!} \left(1-O\left(\frac{1}n\right)\right)    \leq  p_{n, \bf v}\left(\widehat{\mathcal{T}}_{p},\widehat{\mathcal{T}}_{p}\right) \leq \frac{\textrm{card}\big(B^{n,\bf v}_{\widehat{\mathcal{T}}_{p},\widehat{\mathcal{T}}_{p}}\big)}{{\binom{n-p_1}{v-p_1}}(v-p_1)!{\binom{n-p_2}{v-p_2}}(v-p_2)!}.
\end{align*}
One can conclude since, for any $n \ge 2p,$  
$$\textrm{card}\big(B^{n,\bf v}_{\widehat{\mathcal{T}}_{p},\widehat{\mathcal{T}}_{p}}\big)=\textrm{card}\big(B^{2p,\bf v}_{\widehat{\mathcal{T}}_{p},\widehat{\mathcal{T}}_{p}}\big) {\binom{n-k}{2p-k}}$$
\end{itemize}
and consequently, for any $p \le \sum_{i=1}^k{v_i},$ 
\begin{align*}
    C_{\widehat{\mathcal{T}}_{p},\widehat{\mathcal{T}}_{p},\bf v } &= \frac{\textrm{card}\left(B^{2p,\bf v}_{\widehat{\mathcal{T}_p},\widehat{\mathcal{T}_p}}\right)}{(2p-k)!},
\end{align*}
and $C_{\hat g_1, \hat g_2,\bf v  } =0,$ as soon as $(\hat g_1, \hat g_2) \notin \left\{(\widehat{\mathcal{T}}_{p},\widehat{\mathcal{T}}_{p}), p \le \sum_{i=1}^k{v_i}\right\}.$ As the constants $C_{\hat g_1, \hat g_2,\bf v  }$ do not depend on the distributions of $\sigma_n$ and $\rho_n,$ this concludes the proof of Theorem \ref{cor} in the case of two permutations. 
\end{proof}
 To extend to $m>2$, we will proceed by induction on the number $m$ of permutations.
Our main argument is the following lemma. 
\begin{lemma}\label{lem:iteration}
Let $(\sigma^1_n)_{n \ge 1},(\sigma^2_n)_{n \ge 1}$ be two sequences of random permutations such that \linebreak for any $n \ge 1,$ $\sigma^1_n, \sigma^2_n \in \s$. Assume that
\begin{itemize}
    \item[-]  For any $n\geq 1$, $\sigma^1_n$ and  $\sigma^2_n$ are independent. 
    \item[-] For any $n\geq 1$ and $\ell \in \{1,2\},$ for any $\sigma \in \s,$
     \begin{align*}
         \sigma^{-1}\sigma_n^\ell\sigma\overset{d}{=}\sigma_n^\ell.
     \end{align*}
   \item[-] For any $k \geq 1$,  
    \begin{align*}
     \lim_{n\to\infty} \E\left(\left(\frac{\#_1 \,\sigma^{1}_n}{\sqrt n}\right)^k\right)=0 &\quad  \textrm{ and } \quad  \lim_{n\to\infty} \frac{\E(\#_2\,\sigma^1_n)}{n} =0.
    \end{align*}
    \end{itemize}

    Then,
    \begin{align} \label{eq:iteration}
    \lim_{n\to\infty} \E\left(\left(\frac{\#_1 (\sigma^{1}_n\sigma^{2}_n)}{\sqrt n}\right)^k\right)=0 &\quad  \textrm{ and } \quad  \lim_{n\to\infty} \frac{\E(\#_2(\sigma^1_n\sigma^{2}_n))}{n} =0. \quad \quad.
    \end{align}
\end{lemma}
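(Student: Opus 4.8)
The plan is to prove the two limits in \eqref{eq:iteration} separately, using the moment method and the combinatorial machinery already developed for $\sigma_n^{-1}\rho_n$. The key observation is that $\#_1(\sigma_n^1\sigma_n^2)$ and $\#_2(\sigma_n^1\sigma_n^2)$ can be expressed in the language of the graphs $\mathcal{G}_i^m$: since $\sigma_n^1 \overset{d}{=} (\sigma_n^1)^{-1}$ under conjugation invariance and independence, the cycle structure of $\sigma_n^1\sigma_n^2$ is the same as that of $(\sigma_n^1)^{-1}\sigma_n^2$, so that with $\sigma_n:=(\sigma_n^1)^{-1}$ and $\rho_n := \sigma_n^2$ (which again satisfy $H_1$ and $H_2$, with $\sigma_n$ inheriting the hypotheses $H_3$, $H_4$ from $\sigma_n^1$) we may study $\#_k(\sigma_n^{-1}\rho_n) = \tilde{t}_k^n$.

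For the first limit, I would fix an integer $k\ge 1$ and expand $\E\big((\#_1(\sigma_n^{-1}\rho_n))^k\big)$ as a sum over $k$-tuples of indices of probabilities of the form $\p(c_{i_1}(\sigma_n^{-1}\rho_n)=1,\dots,c_{i_k}(\sigma_n^{-1}\rho_n)=1)$, and group the indices by which are equal. By Lemma~\ref{lem:prob}, each such probability equals $\p(\sigma_n \in \mathfrak{S}_{n,g_1}, \rho_n \in \mathfrak{S}_{n,g_2})$ for appropriate graphs; moreover the event $\{c_i(\sigma_n^{-1}\rho_n)=1\}$ forces $(i,j)\in E_{\mathcal{G}_2^i}$ and $(i,j)\in E_{\mathcal{G}_1^i}$ where $j=\rho_n(i)$, so that the relevant graphs $g_1, g_2$ either contain a loop (when $j=i$, i.e. $\sigma_n(i)=\rho_n(i)$, contributing fixed points) or consist of disjoint edges/$\mathcal{T}_p^n$-type components. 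Using $H_1$ to factorize, Lemma~\ref{b111} to bound the loop contributions together with Lemma~\ref{lem:negligible} applied to $\sigma_n$ (which satisfies the fixed-point hypothesis), and the count $\textrm{card}(\mathcal{T}_p^n)\le n^{2p}$, one sees every term is $O(n^{-k/2})$ after summing — exactly as in Case 1 and Case 2 of the proof of Proposition~\ref{prop} with $v_i=1$ for all $i$. Hence $\E\big((\#_1(\sigma_n^{-1}\rho_n)/\sqrt n)^k\big)\to 0$.

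For the second limit, it is enough to show $\E(\#_2(\sigma_n^{-1}\rho_n))=o(n)$. Again write $\E(\#_2(\sigma_n^{-1}\rho_n)) = \sum_{i=1}^n \p(c_i(\sigma_n^{-1}\rho_n)=2) = n\,\p(c_1(\sigma_n^{-1}\rho_n)=2)$ by conjugation invariance, so it suffices to prove $\p(c_1(\sigma_n^{-1}\rho_n)=2)=o(1)$. The event $\{c_1(\sigma_n^{-1}\rho_n)=2\}$ means $\mathcal{G}_1^1(\sigma_n,\rho_n)$ and $\mathcal{G}_2^1(\sigma_n,\rho_n)$ are graphs in $\mathbb{G}_2^n$; enumerating the possible (unlabeled) shapes — which may have loops, a double edge forming a 2-cycle, or two disjoint edges — and applying Lemma~\ref{b111} for the loop cases (killed by $H_3$ on $\sigma_n$ via Lemma~\ref{lem:negligible}) and Lemma~\ref{lem172} for the shapes containing a 2-cycle (killed by $H_4$ on $\sigma_n$), one reduces to the $\widehat{\mathcal{T}}_2$-type contribution, which by Lemma~\ref{lem17} is $\Theta(1/n)\cdot\textrm{card}(B)=\Theta(n^{-1}\cdot n^{?})$; a careful count shows the relevant labeled configurations number $O(n^3)$ while the normalization is $\binom{n-2}{2}2!\sim n^4/2$, giving $O(1/n)=o(1)$.

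The main obstacle I expect is the combinatorial bookkeeping in the second part: unlike $\#_1$, which only involves single edges, the event $c_1(\sigma_n^{-1}\rho_n)=2$ allows $\mathcal{G}_1^1$ and $\mathcal{G}_2^1$ to have several distinct isomorphism types (in particular genuine $2$-cycles in either $\sigma_n$ or $\rho_n$), and one must check that \emph{each} of these, after applying the correct lemma among Lemma~\ref{b111}, Lemma~\ref{lem172}, Lemma~\ref{lem17}, contributes $o(1)$ — and crucially that it is always a hypothesis on $\sigma_n$ (namely $H_3$ or $H_4$), not on $\rho_n$, that is needed, which is precisely why only one of the two permutations in each consecutive pair must satisfy the control. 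Once this case analysis is organized, the estimates themselves are routine applications of the preliminary lemmas.
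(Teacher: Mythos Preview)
Your approach is essentially the same as the paper's: the paper's proof is a two-line sketch that says to rerun the graph-counting analysis with $v_1=\dots=v_k=1$ (for the $\#_1$ moment) and with $k=1,\,v_1=2$ (for the $\#_2$ expectation), targeting the weaker bounds $o(n^{-k/2})$ and $o(1)$ respectively, and you do exactly this with more detail. Your identification of the key point --- that only the hypotheses on $\sigma_n^1$ are ever invoked --- is correct and is what makes the induction on $m$ go through.

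One small caution on the bookkeeping you flag: in the $\#_2$ case, your trichotomy ``loops / 2-cycle / $\widehat{\mathcal T}_2$'' is phrased as if a loop anywhere lets you invoke Lemma~\ref{lem:negligible} on $\sigma_n$, but when $f_1=0$ and $f_2>0$ you cannot. This is harmless: enumerating the achievable pairs $(\hat g_1,\hat g_2)$ for $k_1=2$, the case $f_1=0,\,f_2=1$ has $g_1$ a length-2 path with $p_1=1$, $v=3$, $p_2=2$, and the \emph{basic} Lemma~\ref{b111} bound already gives $O(n^{p_1+p_2-v-1})=O(n^{-1})$; the case $f_1=0,\,f_2=2$ forces $g_1$ to be a 2-cycle on two vertices, so Lemma~\ref{lem172} applied to $\sigma_n$ (via $H_4$) gives the needed $o(1)$. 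So the case analysis closes exactly as you anticipate, using only hypotheses on $\sigma_n$. (Also, in your final line the normalization $\binom{n-2}{2}2!\sim n^2$, not $n^4$; you presumably meant the product of the two factors $\p(\sigma_n\in\mathfrak S_{n,g_1})\p(\rho_n\in\mathfrak S_{n,g_2})$, which is $\Theta(n^{-4})$.)
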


\begin{proof}
We will only give a sketch of the proof. The idea is to repeat the same study as in the case $m=2$ in the two particular quantities.
\begin{itemize}
  \item  Take  $k \ge 1$ and $v_1=v_2=\dots=v_k=1.$  One can show that, under the hypotheses of Lemma \ref{lem:iteration},
    \begin{align*}
\lim_{n\to \infty}  \sum_{ \substack{  \hat{g}_i,\hat{g}'_i \in \hat{\mathbb{G}}_{1}, \, 1\leq i\leq k }}n^{\frac{k}{2}}
\p(\hat{\mathcal{G}}^{\{1,2,\dots,k\}}(\sigma^1_n,\sigma^2_n)=(\hat{g}_1,\hat{g}'_1,\hat{g}_2,\dots \hat{g}'_k ))=0.
\end{align*}
This leads to the first limit in \eqref{eq:iteration}.
    \item Take $k=1$ and $v_1=2.$  One can show  that, under the hypotheses of Lemma \ref{lem:iteration},
    \begin{align*}
 \forall \hat{g}_1,\hat{g}_2  \in \hat{\mathbb{G}}_{2}, \lim_{n\to \infty} \p((\hat{\mathcal{G}}^1_1(\sigma^1_n,\sigma^2_n),\hat{\mathcal{G}}^1_2(\sigma^1_n,\sigma^2_n))=(\hat{g}_1,\hat{g}_2)))=0.
\end{align*}
  This leads to the second limit in \eqref{eq:iteration}.
\end{itemize} 
\end{proof}

\section{Further discussion}
%
%
%
In this last section, we make a few remarks on the optimality of the assumptions $H_3$ and $H_4$ in Theorem \ref{cor}. We assume hereafter that $H_1$ and $H_2$ hold true and consider for the sake of clarity the case $m=2.$
\begin{itemize}
 \item The assumption  $H_3$ is optimal in the sense that if 
$$ \liminf_{n\to\infty} n^{-\frac k 2}\min(\E((\#_1\,\sigma_n)^k),\E((\#_1\,\rho_n)^k))=\varepsilon_k>0,$$ 
        then
        $$ \liminf_{n\to\infty} \E( (\#_1(\sigma_{n}\rho_n))^k) \geq \E(\xi_1^k)+\varepsilon^2_k.$$
Indeed, going back to the equation \eqref{st}, one can see that in the case $v_1=v_2=,\dots=v_k=1,$ if $\hat{g}$ is the class of  the graph with adjacency matrix
        $ {\rm  Id}_k$ the event $\{ (\hat{\mathcal{G}}^{1,2,\dots,k}_1(\sigma_n,\rho_n), \hat{\mathcal{G}}^{1,2,\dots,k}_2(\sigma_n,\rho_n))=(\hat{g},\hat{g})\}$ will contribute to the limit, leading to the term $\varepsilon^2_k.$
\item  Similarly  $H_4$ is optimal in the sense that if $$ \liminf_{n\to\infty} \left(\frac{\min(\E(\#_2\,\sigma_n),\E(\#_2\,\rho_n))}{n}\right)=\varepsilon^\prime >0,$$ 
then,
        $$ \liminf_{n\to\infty} \E\left( \left(\#_1(\sigma_{n}\rho_n)\right)^2\right) \geq 2+{\varepsilon'}^2.$$
Indeed, as above, in the case $v_1=v_2=1,$  if $\hat{g}^\prime$ is the class of the graph with adjacency matrix \linebreak
        $(\begin{smallmatrix}
0 & 1  \\ 
1 & 0 
\end{smallmatrix}),$ the event $\{ (\hat{\mathcal{G}}^{1,2,\dots,k}_1(\sigma_n,\rho_n), \hat{\mathcal{G}}^{1,2,\dots,k}_2(\sigma_n,\rho_n))=(\hat{g}^\prime,\hat{g}^\prime)\}$ will contribute to the limit. 
\item Assume now that one of the bounds in $H_3$ is not satisfied. More precisely, assume that there exists $k\geq 1$ such that
$$ \liminf_{n\to\infty} n^{-\frac{k}{2}} \E((\#_1\sigma_n)^k)=\varepsilon_k>0,$$ 
or 
$$\liminf_{n\to\infty} \frac{\E(\#_2\,\sigma_n)}{n}=\varepsilon'>0.$$
  Then, by similar arguments, one can check that  the convergences 
  $$\forall k\geq 1, \lim_{n\to\infty}n^{-\frac{k}{2}} \E((\#_1 \,\rho_n)^k)=0 \quad \textrm{ and }\quad \lim_{n\to\infty} \frac{\E(\#_2\,\rho_n)}{n}=0 $$
  are a necessary condition to obtain \eqref{maineq} and that the convergences 
    $$ \forall k\geq 1, \lim_{n\to\infty}n^{-\frac{k}{2}} \E((\#_1\,\rho_n)^{k})=0, \ \ 
            \limsup_{n\to\infty} n^{-\frac{k}{2}} \E((\#_1\,\sigma_n)^k) <\infty
    \quad \textrm{ and }\quad \lim_{n\to\infty} \frac{\E(\#_2\,\rho_n)}{n}=0$$
      are a sufficient condition to obtain \eqref{maineq}.
\end{itemize}

\bibliography{bib}
\end{document}